\definecolor{gr}{rgb}   {0.,   0.69,   0.23 }
\definecolor{bl}{rgb}   {0.,   0.5,   1. }
\definecolor{mg}{rgb}   {0.85,  0.,    0.85}
\definecolor{or}{rgb}   {0.9,  0.5,   0.}
\definecolor{webred}{rgb}{0.75,0,0}
\definecolor{webgreen}{rgb}{0,0.75,0}
\newtheorem{theorem}{Theorem}[section]
\newtheorem{proposition}[theorem]{Proposition}
\newtheorem{lemma}[theorem]{Lemma}
\newtheorem{corollary}[theorem]{Corollary}
\theoremstyle{definition}
\newtheorem{definition}[theorem]{Definition}
\newtheorem{notation}[theorem]{Notation}
\theoremstyle{remark}
\newtheorem{remark}[theorem]{Remark}
\newcommand{\Bk}{\color{black}}
\newcommand{\beq}{\begin{equation}}
\newcommand{\eeq}{\end{equation}}
\newcommand{\ba}{\begin{array}}
\newcommand{\ea}{\end{array}}
\newcommand{\bea}{\begin{eqnarray}}
\newcommand{\eea}{\end{eqnarray}}
\newcommand{\beas}{\begin{eqnarray*}}
\newcommand{\eeas}{\end{eqnarray*}}
\def\paragraph{\@startsection{paragraph}{4}%
  \z@{0.3em}{-.5em}%
  {$\bullet$ \ \normalfont\itshape}}
\newcommand{\gb}{\mathfrak{b}}
\newcommand{\N}{\mathbb{N}}
\newcommand{\R}{\mathbb{R}}
\newcommand{\bB}{{\bf B}}
\newcommand{\bA}{{\bf A}}
\newcommand{\gh}{\mathfrak{h}}
\newcommand{\gq}{\mathfrak{q}}
\newcommand{\gS}{\mathsf{sp}}
\newcommand{\dx}{\,\mathsf{d}}
\newcommand{\one}{\mathds{1}}
\newcommand{\ess}{\mathsf{ess}}
\newcommand{\sL}{\mathsf{L}}
\newcommand{\sH}{\mathsf{H}}
\newcommand{\Dom}{\mathsf{Dom}}
\newcommand{\loc}{\mathsf{loc}}
\newcommand{\spann}{\mathsf{span}\,}
\newcommand{\lef}{\mathsf{le}}
\newcommand{\eps}{\varepsilon}
\newcommand{\rig}{\mathsf{ri}}
\newcommand{\rank}{\mathsf{rank\,}}
\begin{document}

\title[Band functions in the presence of magnetic steps]{Band functions in the presence of magnetic steps}

\author[P.\ D.\ Hislop]{P.\ D.\ Hislop}
\address{Department of Mathematics,
    University of Kentucky,
    Lexington, Kentucky  40506-0027, USA}
\email{peter.hislop@uky.edu}

\author[N.\ Popoff]{N.\ Popoff}
\address{D\'epartement de Math\'ematiques, Universit\'e de Bordeaux}
\email{nicolas.popoff@cpt.univ-mrs.fr}

\author[N.\ Raymond]{N.\ Raymond}
\address{Institut de Recherche Math\'ematique de Rennes. Universit\'e de Rennes 1, France}
\email{nicolas.raymond@univ-rennes1.fr}

\author[M.\ P. \ Sundqvist]{M.\ P. Sundqvist}
\address{Centre for Mathematical Sciences, Box 118, SE-22100, Lund, Sweden}
\email{mickep@maths.lth.se}

\thanks{PDH was partially supported by NSF through grant DMS-1103104.}

\thanks{Version of \today}

\begin{abstract}
  We complete the analysis of the band functions for two-dimensional magnetic Schr\"o\-dinger operators with piecewise constant magnetic fields. The discontinuity of the magnetic field can create edge currents that flow along the discontinuity that have been described by physicists. Properties of these edge currents are directly related to the behavior of the band functions. The effective potential of the fiber operator is an asymmetric double well (eventually degenerated) and the analysis of the splitting of the bands incorporates the asymmetry. If the magnetic field vanishes, the reduced operator has essential spectrum and we provide an explicit description of the band functions located below the essential spectrum. For non degenerate magnetic steps, we provide an asymptotic expansion of the band functions  at infinity. We prove that when the ratio of the two magnetic fields is rational, a splitting of the band functions occurs and has a natural order, predicted by numerical computations.
 \end{abstract}

\maketitle \thispagestyle{empty}



\vspace{.2in}

{\bf  AMS 2000 Mathematics Subject Classification:} 35J10, 81Q10,
35P20\\
{\bf  Keywords:}
magnetic Schr\"odinger operators, edge currents, band functions \\

\section{Introduction and motivation}

In this paper, we investigate the spectral properties of magnetic Hamiltonians
\beq\label{eq:magnetic-ham1}
\mathcal{L}_{\gb}=(-i\nabla-\bA)^2=D_{x}^2+(D_{y}-a_{\gb}(x))^2 ,
\eeq
in $\sL^2 (\R^2)$ with the following discontinuous magnetic field
\[
\bB(x,y)=b_{1}\one_{\R_{-}}(x)+b_{2}\one_{\R_{+}}(x),
\]
where $\gb=(b_{1},b_{2})\in\R^2$, and $\one_X(x)$ is the characteristic function on $X \subset \R$. An associated vector potential is given by :
\[
\bA(x,y)=(0,a_{\gb}(x)),\qquad a_{\gb}(x)=b_{1}x\one_{\R_{-}}(x)+b_{2}x\one_{\R_{+}}(x).
\]
These operators naturally appear in several models involved in nanophysics such as the Ginzburg--Landau model of superconductivity (see \cite{SJST}) and in quantum transport in a bidimensional electron gas. In particular in an unbounded system, inhomogenous magnetic fields may play the role of a quantum waveguide (\cite{PeetMat93,PeetRej}). More recently it appears that such magnetic barriers also induce transport and act as waveguides in graphene by creating snake states along magnetic discontinuities (see \cite{Or08} and \cite{Gho08} where piecewise constant magnetic fields are considered). These physical properties can be described with the corresponding magnetic Schr\"odinger operators that exhibit a variety of interesting spectral and transport properties. In particular, there may exist edge currents that propagate along the edge at~\mbox{$x=0$}. The conductivity corresponding to these edge currents is, in fact, quantized. Most of these properties can be deduced from the study of the band functions associated with the Hamiltonian (see for instance \cite{ManPur97,DomGerRai}). We will describe later some consequences of our analysis of band functions.

The symmetries of the Hamiltonian allow us to classify various configurations of the magnetic field $(b_1,b_2)$. We notice that $\overline{\mathcal{L}_{\gb}}=\mathcal{L}_{-\gb}$ so that we may assume that $b_{2}\geq 0$. If $S$ denotes the symmetry $(x,y)\mapsto (-x,-y)$, we have $S\mathcal{L}_{b_{1},b_{2}}S=\mathcal{L}_{-b_{2},-b_{1}}=\overline{\mathcal{L}_{b_{2},b_{1}}}$. For $B>0$, we introduce the $\sL^2$-unitary transform
\[
U_{B}\psi(x,y)=B^{-1/2}\psi(B^{-1/2}x,B^{-1/2}y)
\]
and we have
\[
U_{B}^{-1}\mathcal{L}_{\gb}U_{B}=B^{-1}\mathcal{L}_{B \gb}.
\]
These considerations allow the following reductions:
\begin{enumerate}
\item If $b_{1}$ or $b_{2}$ is zero we may assume that $b_{1}=0$ and, if $b_{2}\neq 0$, we may assume that $b_{2}=1$. We call the case $(b_{1},b_{2})=(0,1)$ the \enquote{magnetic wall}.
\item If $b_{1}$ and $b_{2}$ have opposite signs and $|b_{1}|\neq |b_{2}|$, it
is sufficient to consider the case when $-1<b_{1}<0<b_{2}=1$. We call this case the \enquote{trapping magnetic step} (the term trapping will appear clearer after stating our results).
\item If $|b_{1}|= |b_{2}|$, we may restrict ourselves to the cases $(b_{1},b_{2})=(1,1)$ and $(b_{1},b_{2})=(-1,1)$. This last one may be called the \enquote{symmetric trapping magnetic step} or \enquote{magnetic barrier}.
\item If $b_{1}$ and $b_{2}$ have the same sign, we may assume that $0<b_{1}<b_{2}=1$. We call this case the \enquote{non-trapping magnetic step}.
\end{enumerate}

A physical description of the distinct classical orbits associated with these magnetic field configurations is provided in~\cite{PeetVas93,PeetRej}. The last two cases, (3) and (4), have already been studied in the mathematical literature (see the overview below). Our goal is to provide a mathematical treatment of the unstudied cases (1) and (2). We will also give the consequences of our results on the behavior of quantum particles submitted to the magnetic field ${\gb}$.

\subsection{The band functions and their link with the magnetic Laplacian}\label{sec:fiber-decomp1}
\paragraph{Direct fiber decomposition}
In order to perform the spectral analysis, we can use the translation invariance in the $y$-direction and thus the direct integral decomposition (see \cite[XIII.16]{ReSi78}) associated with the Fourier transform with respect to $y$, denoted by $\mathcal{F}_{y}$,
\[
\mathcal{F}_{y}\mathcal{L}_{\gb}\mathcal{F}_{y}^{-1}=\int_{k\in\R}^{\oplus} \gh_{\gb}(k) \dx k,
\]
where
\[
\gh_{\gb}(k)=D_{x}^2+V_{\gb}(x,k),\qquad \mbox{ with }\qquad V_{\gb}(x,k)=(k-a_{\gb}(x))^2.
\]
Note that the effective potential $V_{\gb}(x,k)$ has the form
\beq\label{eq:eff-potential1}
V_{\gb}(x,k) =
\begin{cases}
(k-b_1x)^2, & x < 0, \\
(k-b_2x)^2, & x>0.
\end{cases}
\eeq

\begin{figure}[htbp]
\centering
\makebox[\textwidth]{%
\includegraphics[width=6cm]{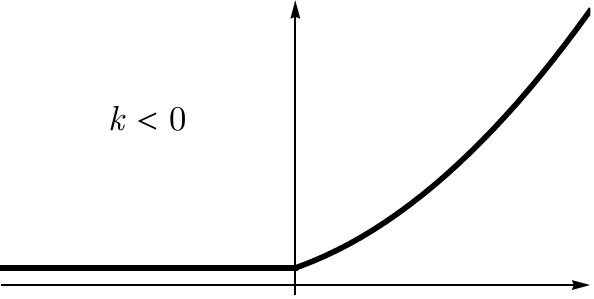}
\hskip 0.5cm
\includegraphics[width=6cm]{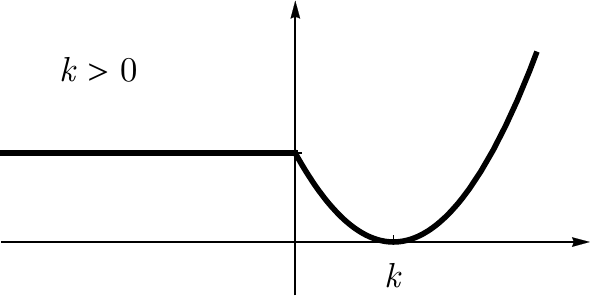}
}
\caption{The potential $V_{\gb}(x,k)$ corresponding to case $(1)$.}
\label{fig:case1}
\end{figure}

\begin{figure}[htbp]
\centering
\makebox[\textwidth]{%
\includegraphics[width=6cm]{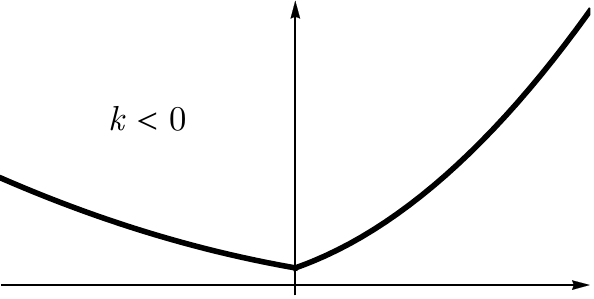}
\hskip 0.5cm
\includegraphics[width=6cm]{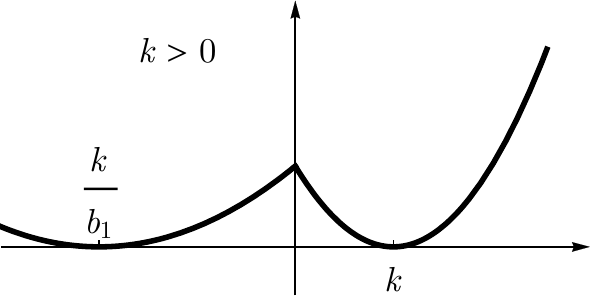}
}
\caption{The potential $V_{\gb}(x,k)$ corresponding to case $(2)$.}
\label{fig:case2}
\end{figure}

\begin{figure}[htbp]
\centering
\makebox[\textwidth]{%
\includegraphics[width=6cm]{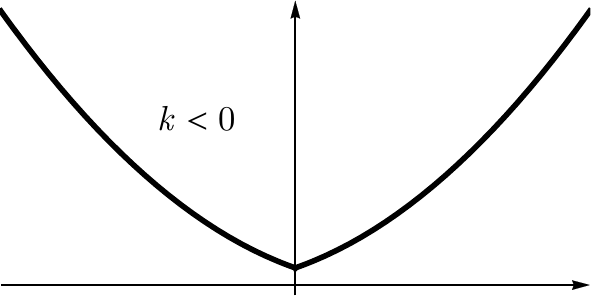}
\hskip 0.5cm
\includegraphics[width=6cm]{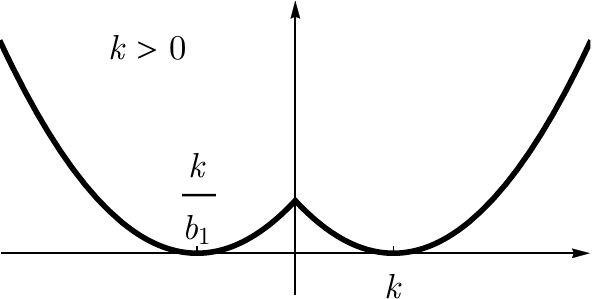}
}
\caption{The potential $V_{\gb}(x,k)$ corresponding to case $(3)$ with $b_1=-1$ and $b_2=1$.}
\label{fig:case3a}
\end{figure}


\begin{figure}[htbp]
\centering
\makebox[\textwidth]{%
\includegraphics[width=6cm]{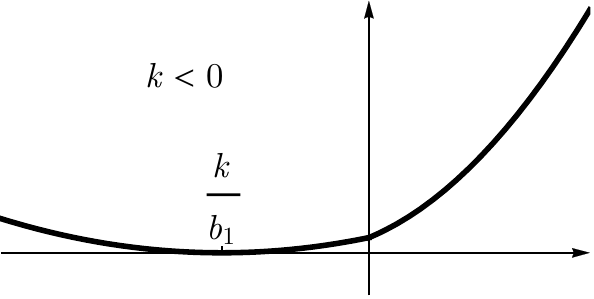}
\hskip 0.5cm
\includegraphics[width=6cm]{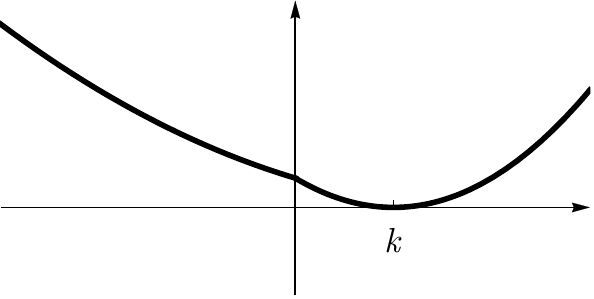}
}
\caption{The potential $V_{\gb}(x,k)$ corresponding to case $(4)$.}
\label{fig:case4}
\end{figure}

The domain of $\gh_{\gb}(k)$ is given by
\[
\Dom(\gh_{\gb}(k))=\{\psi\in\Dom(\gq_{\gb}(k)) : (D_{x}^2+V_{\gb}(x,k))\psi\in\sL^2(\R)\},
\]
where the quadratic form $\gq_{\gb}(k)$ is defined by
\[
\gq_{\gb}(k)(\psi)=\int_{x\in\R} |\psi'(x)|^2+V_{\gb}(x,k)|\psi|^2\dx x.
\]
The spectrum of $\mathcal{L}_{\gb}$, $\gS(\mathcal{L}_{\gb})$, satisfies
\[
\gS(\mathcal{L}_{\gb})=\overline{\bigcup_{k\in\R}\gS(\gh_{\gb}(k))}.
\]
We now define the $n$-th band function below the essential spectrum:
\begin{notation}
We denote by $\lambda_{\gb,n}(k)$ the $n$-th Rayleigh quotient of $\gh_{\gb}(k)$. We recall that if $\lambda_{\gb,n}(k)$ is strictly less than the infimum of the essential spectrum, it coincides with the $n$-th eigenvalue of $\gh_{\gb}(k)$.
\end{notation}

\paragraph{State of the art}

The cases (3) and (4) have already been extensively studied. Case~(3) for which $(b_1,b_2) = (1,1)$ is just the Landau Hamiltonian. Its spectrum is pure point and consists of the so-called Landau levels; infinitely degenerated eigenvalues $(2n-1)$, for $n \geq 1$. The band functions are constants $\lambda_{\gb,n}(k) = 2n-1$.

The case (3), for which $(b_1,b_2) = (-1,1)$, can be linked to operators acting on a half-line. We first need to introduce the standard de~Gennes operators, that will also be used in some results of this article. We recall below well-known properties of the spectrum of these operators.

\begin{notation}
We let $E_{0}=0$ and for $n\geq 1$, $E_{n}=2n-1$.
\end{notation}

\begin{notation}
\label{ntn:theta1}
For $\circ={\rm N}$, (resp. $\circ={\rm D}$), let us introduce $\gh^{\circ}(k)$ the Neumann (resp. Dirichlet) realization on $\R_{+}$ of $D_{t}^2+(t-k)^2$ and its eigenvalues $(\mu^\circ_{\ell}(k))_{\ell\geq 1}$. For all $\ell\geq 1$, the function $\mu_{\ell}^{\rm N}$ admits a unique and non-degenerate minimum at $k=\xi_{\ell-1}$, denoted by $\Theta_{\ell-1}$, with $\Theta_{\ell-1} \in (E_{\ell-1},E_{\ell})$. Moreover, $\xi_{\ell-1}$ is also the unique solution of the equation $\mu^{\rm N}_{\ell}(k)=k^2$.
The function $\mu_{\ell}^{\rm D}(k)$ is a decreasing function of $k$.
Both $\mu_{\ell}^{\rm N}(k)$ and $\mu_{\ell}^{\rm D}(k)$ go to $+\infty$ as $k\to-\infty$ and go to $E_{\ell}$ as $k\to+\infty$.
\end{notation}
We refer mainly to \cite{dBiePu99} for the Dirichlet operator and \cite{DauHe93,HeMo01} for the Neumann one. All these properties can also be found in \cite{FouHel10,Ivrii}.

Using symmetry arguments, it is proved in~\cite{DomHisSoc13,Popoff} that for $\gb=(-1,1)$ and $\ell\geq1$,
\[
\lambda_{\gb,2\ell-1}(k) = \mu_{\ell}^{\rm N} (k) \quad \text{and} \quad \lambda_{\gb,2\ell} (k) = \mu_{\ell}^{\rm D} (k)\quad k \in \R,
\]
linking the de Gennes operator with the model with symmetric trapping magnetic steps.
This analysis of the band functions provides a description of a quantum particle localized in energy far from the thresholds $\{ E_n \}$
and submitted to the system: they can exhibit edge current that may be viewed as quantum manifestations of the classical
snake orbits \cite{PeetRej} along the edge $x=0$. These edge currents are exponentially decreasing far from the edge. The corresponding edge currents and the asymptotics of the band functions are described in \cite{DomHisSoc13}. The effective potential (see \eqref{eq:eff-potential1}) has the form of two symmetric double wells for $k > 0$. The fact that each band function is exponentially close to a Landau level as $k \rightarrow +\infty$ is called a {\it splitting} of the band function. The analysis of quantum states localized in energy near the thresholds is made in \cite{HisPofSoc14} for one edge quantum hall system and relies on a precise asymptotic analysis of the band function $\mu_{\ell}^{\rm D}(k)$ as $k \to +\infty$. The band functions associated to this case are displayed in Figure~\ref{F:1}.

The case (4), a non-trapping magnetic step, was treated in the initial paper by Iwatsuka \cite{Iwa85} (along with the more general situation for which the magnetic field depends only on one direction and simply has constant values at $\pm
 \infty$). In this case, all the band functions are monotone nondecreasing. The analysis of the associated edge current is done in \cite{HisSoc15}.

Notice that all cases (1)--(4) are described from a physical point of view in \cite{PeetRej}.

\begin{figure}
\centering
\includegraphics[width=10cm]{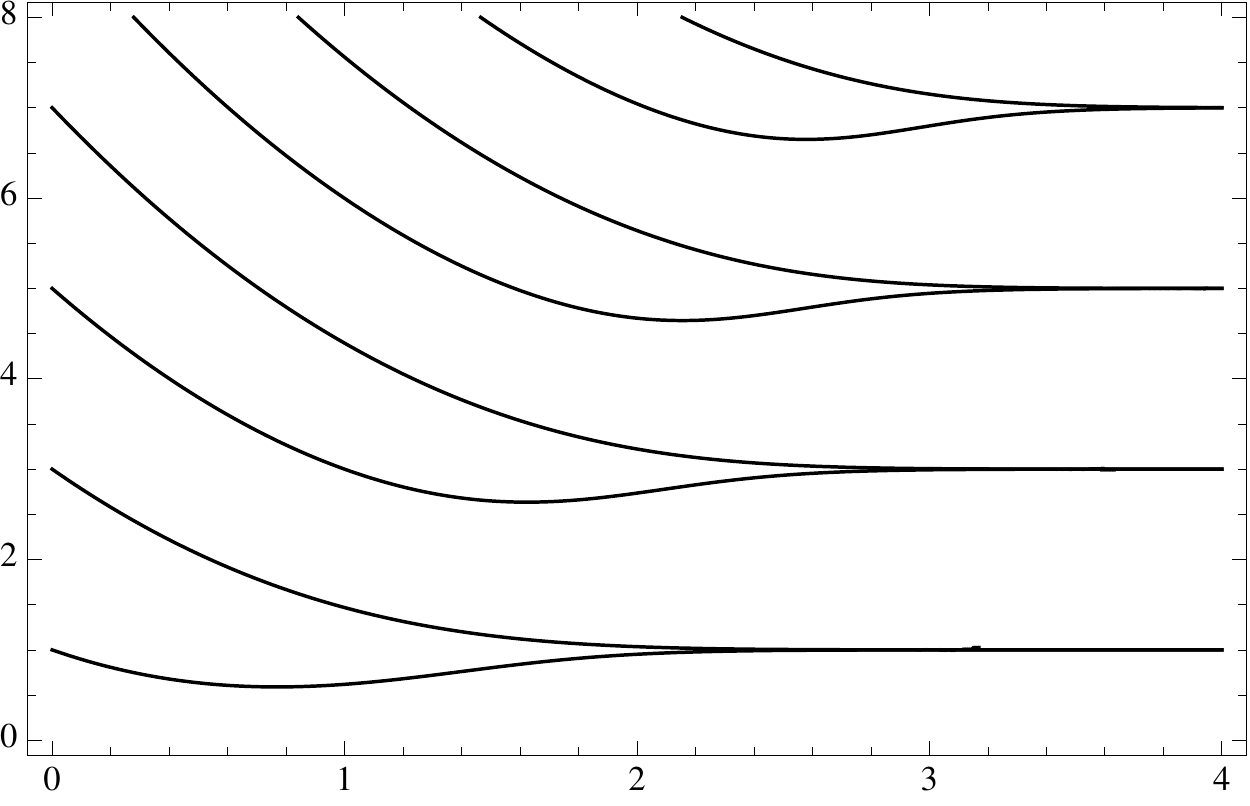}
\caption{Band functions for the symmetric double-well case $\gb=(-1,1)$.}
\label{F:1}
\end{figure}

\subsection{Main results}\label{subsec:main1}

We first consider case (1) for which $(b_1, b_2) = (0, 1)$.
This was not treated in \cite{HisSoc13} or in \cite{DomHisSoc13}. It is physically interesting because the classical orbits of an electron originating in the half-plane $x < 0$ are unbounded straight half-lines or lines (depending upon whether the orbit intersects $x=0$ or not), whereas those in the half-plane $x>0$ are bounded circles provided the Lamor orbit does not cross the edge at $x=0$.

\begin{theorem}\label{magnetic-wall}
Let $\gb = (0,1)$.

If $k<0$, then the fiber operator $\gh_{\gb}(k)$ has only purely absolutely continuous spectrum $[k^2, \infty)$. Consequently, the spectrum of $\mathcal{L}_{\gb}$ is $[0, \infty)$.

If $k > 0$, the essential spectrum of $\gh_{\gb}(k)$ is $[k^2, \infty)$. The band functions $\lambda_{\gb,n}$ are nondecreasing functions with $E_{n-1} < \lambda_{\gb,n}(k) < E_n$ and $\lim_{k \rightarrow \infty} \lambda_{\gb,n}(k) = E_n= 2n-1$. For $n\geq 1$ and $\xi_{n-1}<k<\xi_{n}$ (recall Notation \ref{ntn:theta1}), the operator $\gh_{\gb}(k)$ admits exactly $n$ simple eigenvalues below the threshold of its essential spectrum.
\end{theorem}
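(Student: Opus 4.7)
The plan is to split the analysis by the sign of $k$. For $k<0$, the potential satisfies $V_\gb(x,k)\geq k^2$ with equality precisely on $\R_-$; combined with Weyl sequences supported far in $\R_-$ (where $V\equiv k^2$), this forces $\sigma(\gh_\gb(k))=\sigma_{\rm ess}(\gh_\gb(k))=[k^2,\infty)$. To rule out eigenvalues at $E\geq k^2$, one notes that on $\R_-$ the reduced ODE only has bounded non-$L^2$ (oscillatory or affine) solutions; pure absolute continuity then follows from standard one-dimensional Schr\"odinger theory, as the quadratic growth of $V$ at $+\infty$ makes that endpoint limit-point and produces a unique $L^2$-solution per $E>k^2$. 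The identity $\sigma(\mathcal{L}_\gb)=[0,\infty)$ follows by direct integration and monotonicity of $k\mapsto k^2$.

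The core of the theorem is the case $k>0$. The essential spectrum is identified as above. To analyze the eigenvalues below $k^2$, I would reduce to a half-line Robin problem: if $E<k^2$ and $\psi\in L^2(\R)$ is an eigenfunction, then necessarily $\psi(x)=\psi(0)e^{\eta x}$ on $\R_-$ with $\eta=\sqrt{k^2-E}>0$, and the matching of $\psi'$ at $0$ enforces $\phi'(0)=\eta\phi(0)$ on $\phi:=\psi|_{\R_+}$, where $\phi$ solves the eigenvalue equation for $D_x^2+(x-k)^2$. Denote by $\mu_\ell^{(\eta)}(k)$ the eigenvalues of this Robin realization, associated with the quadratic form $\int|\phi'|^2+(x-k)^2|\phi|^2\,\dx x+\eta|\phi(0)|^2$; each $\eta\mapsto\mu_\ell^{(\eta)}(k)$ is continuous and strictly increasing, so $f_\ell(\eta):=\mu_\ell^{(\eta)}(k)+\eta^2$ is strictly increasing from $f_\ell(0)=\mu_\ell^{\rm N}(k)$ to $+\infty$. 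The self-consistency equation $f_\ell(\eta)=k^2$ then has a unique positive solution $\eta_\ell$ precisely when $\mu_\ell^{\rm N}(k)<k^2$, equivalently $k>\xi_{\ell-1}$ by Notation~\ref{ntn:theta1}. For $\xi_{n-1}<k<\xi_n$ this produces exactly $n$ simple eigenvalues $E_\ell=k^2-\eta_\ell^2$ of $\gh_\gb(k)$ below $k^2$.

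For the bounds on $\lambda_{\gb,n}(k)$ in the eigenvalue regime, Neumann decoupling at $x=0$ gives $\gh_\gb(k)\geq\gh_-^{\rm N}(k)\oplus\gh_+^{\rm N}(k)$ and hence $\lambda_{\gb,n}(k)\geq\mu_n^{\rm N}(k)\geq\Theta_{n-1}>E_{n-1}$. For the upper bound, the pointwise inequality $V_\gb(x,k)\leq(x-k)^2$ (valid on $\R_-$ because $k\geq 0$) gives $\gh_\gb(k)\leq D_x^2+(x-k)^2$ in the form sense, so $\lambda_{\gb,n}(k)\leq E_n$; strictness follows on testing the form on the span of the first $n$ shifted Hermite functions, which are analytic and non-zero on $\R_-$. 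The sandwich $\mu_n^{\rm N}(k)\leq\lambda_{\gb,n}(k)\leq\mu_n^{\rm D}(k)$ together with the known limits $\mu_n^{\rm N}(k),\mu_n^{\rm D}(k)\to E_n$ then gives the asymptotics at $+\infty$.

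Finally, for strict monotonicity I would use a Feynman--Hellmann computation on the whole-line eigenfunction:
\[
\lambda_{\gb,n}'(k)\,\|\psi\|^2 = 2k\int_{-\infty}^0|\psi|^2\,\dx x+2\int_0^\infty(k-x)|\phi_n|^2\,\dx x,
\]
whose second summand is \emph{a priori} of indefinite sign. The crucial virial-type identity $\int_0^\infty(x-k)|\phi_n|^2\,\dx x=0$, obtained by multiplying the Robin eigenvalue equation by $\phi_n'$ and integrating by parts (the boundary contributions at $x=0$ collapsing thanks to $\eta^2=k^2-\lambda$), reduces the formula to $\lambda_{\gb,n}'(k)\,\|\psi\|^2=2k\int_{-\infty}^0|\psi|^2\,\dx x>0$ for $k>0$, where $\phi_n(0)\neq 0$ by ODE uniqueness for the Robin condition with $\eta>0$. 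I expect the main obstacles to be the one-dimensional spectral-theoretic proof of pure absolute continuity for $k<0$ and the bookkeeping in the Robin self-consistency argument; the virial identity that fixes the sign of $\lambda_{\gb,n}'(k)$ is the cleanest step but is not evident before one sets up the calculation.
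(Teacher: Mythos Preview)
Your proof is correct and differs from the paper's at almost every step. For the eigenvalue count, the paper (its Proposition after Lemma~\ref{limit}) identifies the crossing $\lambda_{\gb,n}(k)=k^2$ by an explicit computation with parabolic cylinder functions and a limiting argument $k\downarrow\tilde k_n$; your Robin self-consistency equation $\mu_\ell^{(\eta)}(k)+\eta^2=k^2$ is more direct and lands immediately on the de~Gennes threshold condition $\mu_\ell^{\rm N}(k)<k^2\Leftrightarrow k>\xi_{\ell-1}$, bypassing special functions entirely. For the lower bound $\lambda_{\gb,n}(k)>E_{n-1}$, the paper translates the eigenfunction to its first positive zero and invokes Sturm oscillation to recognize a Dirichlet de~Gennes eigenfunction; your Neumann bracketing at $x=0$ gives the same bound via $\lambda_{\gb,n}(k)\geq\mu_n^{\rm N}(k)\geq\Theta_{n-1}>E_{n-1}$ without zero-counting. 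For monotonicity, the paper translates by $k$ and compares potentials pointwise, obtaining only the non-strict inequality actually stated in the theorem; your Feynman--Hellmann/virial identity gives strict increase $\lambda_{\gb,n}'(k)>0$, a genuine strengthening (the virial cancellation $\int_0^\infty(x-k)|\phi_n|^2=0$ is indeed exact thanks to the Robin relation $\phi_n'(0)^2=(k^2-\lambda)\phi_n(0)^2$). For the limit, the paper invokes semiclassical harmonic approximation, while your sandwich $\mu_n^{\rm N}(k)\leq\lambda_{\gb,n}(k)<E_n$ with $\mu_n^{\rm N}(k)\to E_n$ is elementary. Overall your argument trades the paper's use of parabolic cylinder functions, Sturm theory and harmonic approximation for systematic comparison with the de~Gennes operators, whose relevant properties are already recorded in Notation~\ref{ntn:theta1}; the price is a slightly heavier reliance on those quoted facts (notably $\Theta_{\ell-1}\in(E_{\ell-1},E_\ell)$ and the uniqueness of the crossing $\mu_\ell^{\rm N}(k)=k^2$).
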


The case (2) of a trapping magnetic step is technically more challenging. It relies on the asymptotics of the eigenfunctions of a Schr\"odinger operator with an asymmetric double-well potential. The results concerning the splitting of two energy levels depends upon the ratio of the two constant magnetic fields $b_1 / b_2 = b_1$ since $b_2=1$ by normalization.

\begin{definition}\label{def}
We define $\mathfrak{L}=\left\{E_{n},n\geq 1\}\bigcup\{|b_{1}|E_{n}, n\geq 1\right\}$ and we define the splitting set $\mathfrak{S}=\left\{E_{n},n\geq 1\}\bigcap\{|b_{1}|E_{n}, n\geq 1\right\}$. We will say that there is a \emph{splitting of levels} if $\mathfrak{S}\neq\emptyset$ i.e. if there exist positive integers $m<n$ such that
\beq\label{eq:split1}
b=\frac{2n-1}{2m-1},\quad \mbox{ with } b=|b_{1}|.
\eeq
\end{definition}
As we shall see, the problem of the behavior for large $k$ of the band functions can be reformulated into a semiclassical problem for a one dimensional Schr\"odinger operator with multiple wells (see for instance \cite{HeSj84,HeSjIII}). In particular the limits of the band functions are precisely the elements of $\mathfrak{L}$, counted with multiplicty. Roughly speaking, either one or two band functions converge toward each element of $\mathfrak{L}$.

However our investigation will not rely on the semiclassical tools: We will use asymptotic expansions of special functions in order to derive the asymptotic behavior of the band functions. Let us now state our main theorem.

\begin{theorem}\label{magnetic-step}
Let $\gb=(b_1, 1)$, with $-1<b_1<0$.
For $k \in \R$, the fiber operator $\gh_{\gb}(k)$ has only purely discrete spectrum $\lambda_{\gb,n}(k)$ with simple eigenvalues, and $\lim_{k \to - \infty} \lambda_{\gb,n}(k) = + \infty$. The set of limit points of the band functions $\lambda_{\gb,n}(k)$ as $k \to +\infty$ is precisely $\mathfrak{L}$.
Moreover, we may describe the asymptotics of the band functions in the following way.
\begin{enumerate}[(i)]
\item\label{I} Non-splitting case.  Let $\lambda\in \mathfrak{L}\setminus\mathfrak{S}$. We can write in a unique way $\lambda=E_{n}$ or $\lambda=bE_{n}$ with $n\geq1$. 
Then there exists a unique $p_{n}\in \N^{*}$ such that:
\begin{enumerate}[(a)]
\item\label{1} $\lambda_{\gb,p_{n}}(k)-E_{n}=\eps_{n}(k)\underset{k\to+\infty}{\to} 0$ (in the case $\lambda=E_{n}$),
\item\label{2} $\lambda_{\gb,p_{n}}(k)-bE_{n}=\eps_{n}(k)\underset{k\to+\infty}{\to} 0$ (in the case $\lambda=bE_{n}$).
\end{enumerate}
In the first case,
\[
\eps_{n}(k)=-\frac{1}{\sqrt{\pi}}2^{n-2}(1+b)\frac{1}{(n-1)!}k^{2n-3}e^{-k^2}(1+o(1)),
\]
and in the second case,
\[
 \eps_{n}(k)=-\frac{1}{\sqrt{\pi}}2^{n-2}b^{3/2-n}(1+b)\frac{1}{(n-1)!}k^{2n-3}e^{-k^2/b}(1+o(1)).
\]

\item\label{II} Splitting case.  Assume that the splitting set is not empty and consider $\lambda\in\mathfrak{S}$. Let us consider $n\geq 1$ and $m\geq 1$ such that $\lambda=E_{n}=bE_{m}$. There exists $p_{n}\in \N^{*}$ such that
\[
\left\{
\begin{aligned}
&\lambda_{\gb,p_{n}}(k)-E_{n}=\eps_{n}^{-}(k)
\\
&\lambda_{\gb,p_{n}+1}(k)-E_{n}=\eps_{n}^{+}(k)
\end{aligned}
\right.
\]
with, as $k$ tends to $+\infty$,
\begin{align*}
\eps_{n}^{-}(k)&=-\frac{2^{n-3/2}(1+b)}{(n-1)!\sqrt{\pi}}k^{2n-3}e^{-k^2}(1+o(1)),
\\
\eps_{n}^{+}(k)&=\frac{2^{m+\frac{3}{2}}b^{-m+\frac{3}{2}}}{(m-1)!(1+b)\sqrt{\pi}}k^{2m+1}e^{-k^2/b}(1+o(1)).
\end{align*}
\end{enumerate}
\end{theorem}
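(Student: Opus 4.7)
Several parts of the theorem reduce to classical one-dimensional arguments. The function $a_{\gb}$ satisfies $a_{\gb}(x)\to+\infty$ as $|x|\to+\infty$ (on the right because $b_{2}=1$, on the left because $b_{1}<0$ forces $b_{1}x\to+\infty$ as $x\to-\infty$), so $V_{\gb}(\cdot,k)$ is confining for every fixed $k$. Hence $\gh_{\gb}(k)$ has compact resolvent and its eigenvalues are automatically simple by the standard one-dimensional theory. For $k\leq 0$ we have $V_{\gb}(x,k)\geq k^{2}$, which yields $\lambda_{\gb,n}(k)\geq k^{2}\to+\infty$ as $k\to-\infty$. As $k\to+\infty$, $V_{\gb}(\cdot,k)$ is a double well with minima at $x_{r}=k$ and $x_{\ell}=-k/b$ separated by a barrier of height $k^{2}$; the wells decouple exponentially and each looks, locally, like a full-line harmonic oscillator of frequency $1$ (right) or $b$ (left), which explains heuristically the limit set $\mathfrak{L}=\{E_{n}\}\cup\{bE_{n}\}$.

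For the quantitative asymptotics, the key observation is that $-\psi''+(k-a_{\gb}(x))^{2}\psi=\lambda\psi$ is, on each half-line, a Weber equation after affine rescaling. The $\sL^{2}$-solutions are
\[
\psi_{r}(x)=D_{\nu}(\sqrt{2}(x-k)),\quad \nu=\tfrac{\lambda-1}{2},\qquad \psi_{\ell}(x)=D_{\nu'}\bigl(-\sqrt{2b}(x+k/b)\bigr),\quad \nu'=\tfrac{\lambda/b-1}{2},
\]
where $D_{\nu}$ is the parabolic cylinder function. Eigenvalues of $\gh_{\gb}(k)$ are zeros of the Wronskian of $\psi_{\ell}$ and $\psi_{r}$ at $x=0$, namely of
\[
F(\lambda,k):=\sqrt{b}\,D_{\nu}(-\sqrt{2}\,k)\,D_{\nu'}'\bigl(-\sqrt{2/b}\,k\bigr)+D_{\nu}'(-\sqrt{2}\,k)\,D_{\nu'}\bigl(-\sqrt{2/b}\,k\bigr).
\]
The connection formula writes $D_{\nu}(-z)$ as $z\to+\infty$ as the sum of an evanescent term $\propto e^{-z^{2}/4}z^{\nu}$ and a growing term $\propto \Gamma(-\nu)^{-1}e^{z^{2}/4}z^{-\nu-1}$; the latter vanishes precisely when $\nu\in\N$, matching the degenerate Landau-level behavior on the full line.

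In the non-splitting case, $\lambda$ is close to exactly one element of $\mathfrak{L}$, say $E_{n}$. Then $\nu$ is close to the integer $n-1$ and $\Gamma(-\nu)^{-1}\sim(-1)^{n-1}(n-1)!\,(\nu-(n-1))$ is small, while $\nu'$ stays away from $\N$ so that $D_{\nu'}(-\sqrt{2/b}\,k)$ and its derivative are of order one with explicit asymptotics. Substituting into $F(\lambda,k)=0$ yields a linear equation for $\lambda-E_{n}$ whose solution is $\eps_{n}(k)$; the prefactor follows from $D_{n-1}(0)$, the Stirling-type constants, and from the $(1+b)$ factor that arises when adding the two Wronskian terms. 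The companion case $\lambda\approx bE_{n}$ is analogous after exchanging $\nu\leftrightarrow\nu'$ and rescaling $k\mapsto k/\sqrt{b}$, which produces the $e^{-k^{2}/b}$ scale.

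The main difficulty is the splitting case $E_{n}=bE_{m}\in\mathfrak{S}$: both $\Gamma(-\nu)^{-1}$ and $\Gamma(-\nu')^{-1}$ are now simultaneously small, so $F(\lambda,k)=0$ becomes a genuine balance between the two exponential scales $e^{-k^{2}}$ and $e^{-k^{2}/b}$, with two branches of solutions in a small neighborhood of $E_{n}$. The delicate point is to retain enough subleading information on both scales to (i) identify which branch is governed by which exponential and (ii) fix the sign of each correction, thereby recovering the natural ordering $\eps_{n}^{-}<0<\eps_{n}^{+}$ predicted by the numerics. Once this is done, substituting the explicit values of $D_{n-1}(0)$, $D_{m-1}(0)$ and their derivatives, together with the Taylor expansions of $\Gamma(-\nu)^{-1}$ and $\Gamma(-\nu')^{-1}$ at the resonant integers, yields the constants in $\eps_{n}^{\pm}$.
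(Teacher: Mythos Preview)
Your approach is essentially the one taken in the paper: reduce to a Wronskian of parabolic cylinder functions matched at $x=0$, use the connection formula so that the growing part carries a $\Gamma^{-1}$ factor which vanishes exactly at the Landau levels, and then Taylor-expand about the relevant integer(s). Two places in your sketch are softer than what the paper actually does, and you should be aware of them.

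First, your argument that the limit set is $\mathfrak{L}$ is, as you say, only heuristic. The paper makes this rigorous by the substitution $x=ky$, turning $\gh_{\gb}(k)$ into $k^{2}$ times a semiclassical operator $\hbar^{2}D_{y}^{2}+V_{\gb}(y,1)$ with $\hbar=k^{-2}$, and then invoking the standard Agmon/harmonic-approximation machinery for multiple wells to get both the limits and the crucial rank statement $\rank\mathds{1}_{[\lambda-\eps_{0},\lambda+\eps_{0}]}(\gh_{\gb}(k))\in\{1,2\}$. This rank statement is what actually gives the existence and uniqueness of your $p_{n}$; without it the Wronskian analysis alone does not tell you how many band functions are converging.

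Second, in the splitting case your phrase ``retain enough subleading information'' hides the actual mechanism. The paper organizes the expanded determinant as four terms $T_{1},\ldots,T_{4}$ carrying the exponential weights $e^{\pm k^{2}/2}e^{\pm k^{2}/(2b)}$; after Taylor-expanding both $\Gamma^{-1}$ factors at the simultaneous poles, the equation $D_{\gb}(E_{n}+\eps,k)=0$ becomes a genuine \emph{quadratic} $\hat A\eps^{2}+(\hat B+\hat C)\eps+\hat D=0$. The point is that $\hat B$ dominates $\hat C$ and $\hat B^{2}\gg \hat A\hat D$, so the two roots separate cleanly as $\eps^{-}\sim-\hat B/\hat A$ and $\eps^{+}\sim-\hat D/\hat B$, which immediately gives the two exponential scales $e^{-k^{2}}$ and $e^{-k^{2}/b}$ and the signs. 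Your proposal anticipates the outcome but not this quadratic structure, which is what makes the ``delicate point'' you mention routine. (Also, your references to ``$D_{n-1}(0)$'' are misplaced: nothing is evaluated at the origin; the constants come entirely from the large-argument asymptotics of $D_{\nu}(-\sqrt{2}k)$ and $D_{\nu'}(-\sqrt{2/b}\,k)$.)
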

The energy levels for which the exponentially small splitting
occurs when $k \rightarrow \infty$ behave like the band functions in the
symmetric magnetic field case case $b_1 = -1$ and $b_2 = 1$. In that case (see  \cite{DomHisSoc13}), two band functions approach the Landau level $2n-1$ as $k \rightarrow \infty$, one is monotone decreasing and the other has a unique, nondegenerate minimum (see Figure \ref{F:1}). In the present article, with the help of our asymptotic expansions, we may deduce the existence of a minimum for some band functions.
\begin{corollary}
\label{C:non-mono}
With the same assumptions and notations as in Theorem \ref{magnetic-step}, the following statements hold:
\begin{enumerate}[(i)]
\item Let $\lambda\in \mathfrak{L}\setminus\mathfrak{S}$. Then each band function converging to $\lambda$ admits a global minimum.
\item Let $\lambda\in \mathfrak{S}$. Then $k\mapsto \lambda_{p_{n}}(k)$ admits a global minimum.
\end{enumerate}
\end{corollary}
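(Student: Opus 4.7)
The plan is to use a standard compactness argument: a continuous function $f:\R\to\R$ with $\lim_{k\to-\infty}f(k)=+\infty$ and $\lim_{k\to+\infty}f(k)=L\in\R$ attains its global infimum on $\R$ provided that $\inf_{\R}f<L$, because in that case the infimum is already realised on a compact subinterval, where continuity forces it to be attained.

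First, I would note that each band function $k\mapsto\lambda_{\gb,n}(k)$ is continuous on $\R$. This is a consequence of the min--max principle applied to the quadratic form $\gq_{\gb}(k)$, whose potential $V_{\gb}(x,k)$ depends continuously (indeed smoothly) on $k$ locally uniformly in $x$; for the part of the spectrum lying below the essential spectrum (which is the regime we use by Theorem~\ref{magnetic-step}, since the discrete spectrum is entire in case~(2)), this yields continuity of $\lambda_{\gb,n}$.

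Second, Theorem~\ref{magnetic-step} already provides two of the three ingredients: $\lim_{k\to-\infty}\lambda_{\gb,n}(k)=+\infty$, and $\lim_{k\to+\infty}\lambda_{\gb,p_{n}}(k)=\lambda$, where $\lambda$ is either $E_{n}$ or $bE_{n}$ (non-splitting case) or $\lambda=E_{n}=bE_{m}$ (splitting case, for the lower branch $p_{n}$). The third ingredient is that the band approaches this limit \emph{strictly from below}. Inspecting the leading coefficients in the asymptotic expansions:
\begin{itemize}
\item In case (i)(a)–(b), both prefactors $-\frac{2^{n-2}(1+b)}{\sqrt{\pi}(n-1)!}$ and $-\frac{2^{n-2}b^{3/2-n}(1+b)}{\sqrt{\pi}(n-1)!}$ are strictly negative (recall $0<b<1$), so $\eps_{n}(k)<0$ for all sufficiently large $k$.
\item In case (ii), the coefficient $-\frac{2^{n-3/2}(1+b)}{(n-1)!\sqrt{\pi}}$ of $\eps_{n}^{-}(k)$ is strictly negative, so $\lambda_{\gb,p_{n}}(k)<E_{n}$ for large $k$.
\end{itemize}
In each case there is some $k_{1}$ with $\lambda_{\gb,p_{n}}(k_{1})<\lambda$.

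Finally I would conclude: fix $\delta>0$ such that $\lambda_{\gb,p_{n}}(k_{1})<\lambda-\delta$. By the two limits above, there exist $R_{\pm}>0$ with $\lambda_{\gb,p_{n}}(k)>\lambda-\delta>\lambda_{\gb,p_{n}}(k_{1})$ for every $k\notin[-R_{-},R_{+}]$. Thus $\inf_{\R}\lambda_{\gb,p_{n}}=\inf_{[-R_{-},R_{+}]}\lambda_{\gb,p_{n}}$, and continuity on this compact interval gives the claimed global minimum. The argument is identical in both statements (i) and (ii); the only non-trivial input is the \emph{sign} of the leading term in the asymptotic expansion, which is where the real work of Theorem~\ref{magnetic-step} is being used. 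I do not expect any genuine obstacle here—the whole subtlety has already been absorbed into the asymptotic analysis; one only has to be careful that, in case (ii), the corollary asserts nothing for the upper branch $\lambda_{\gb,p_{n}+1}$, which approaches $E_{n}$ from above (its $\eps^{+}_{n}$ has positive sign) and indeed may well be monotone.
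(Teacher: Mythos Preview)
Your argument is correct and is essentially the same as the paper's: the paper notes (at the end of \S\ref{subsubsec:no-splitting1}) that since $\eps_n(k)$ is negative the band function approaches its limit from below, and combined with $\lambda_{\gb,p_n}(k)\to+\infty$ as $k\to-\infty$ this forces a minimum. You have simply made the underlying compactness step explicit.
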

Notice that the first band function always admits a global minimum. Moreover if $|b_{1}|$ is not a ratio of odd integers, all the band functions admit a global minimum. We conjecture that the only case when monotonicity occurs is Case $(ii)$ of Theorem \ref{magnetic-step} and corresponds to $\lambda_{p_{n}+1}$. In that case, the non-monotonicity of each band function depends on a non trivial way on the index of the eigenvalue. This situation seems striking to us, especially compared to other known case (see \cite{DomHisSoc13,HisSoc15,Yaf08}).

\subsection{Numerical computations}

We display on Figures \ref{F:2}--\ref{F:4} the band functions associated with the magnetic step $\gb=(-\frac{1}{3},1)$ together with the asymptotics provided below as $k\to+\infty$. The splitting occurs near each value in $\mathfrak{S}=\{2n-1, n\in \N^{*}\}$. These graphs have been obtained by using Mathematica.

\begin{figure}[htbp]
\centering
\includegraphics[width=10cm]{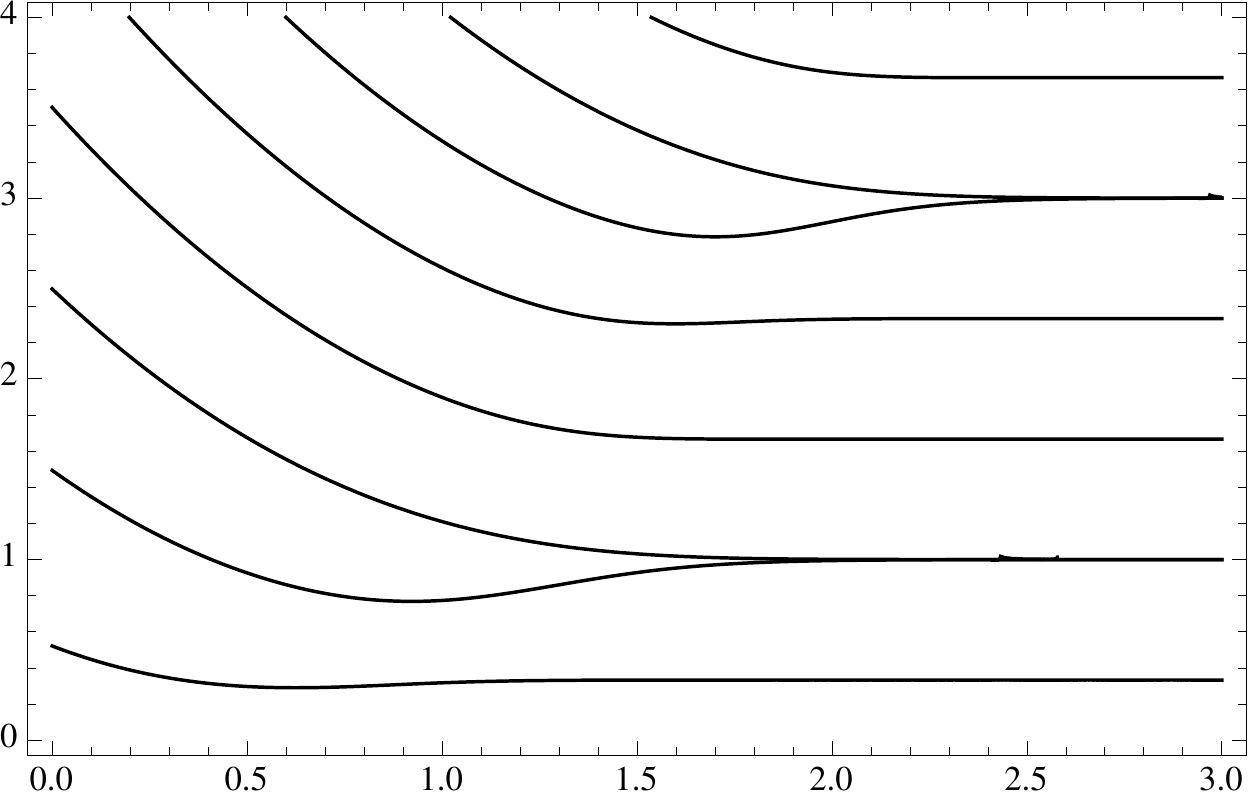}
\caption{Band functions $k\mapsto \lambda_{\gb,n}(k)$ for $\gb=(-\frac{1}{3},1)$.}
\label{F:2}
\end{figure}

\begin{figure}[htbp]
\centering
\includegraphics[width=10cm]{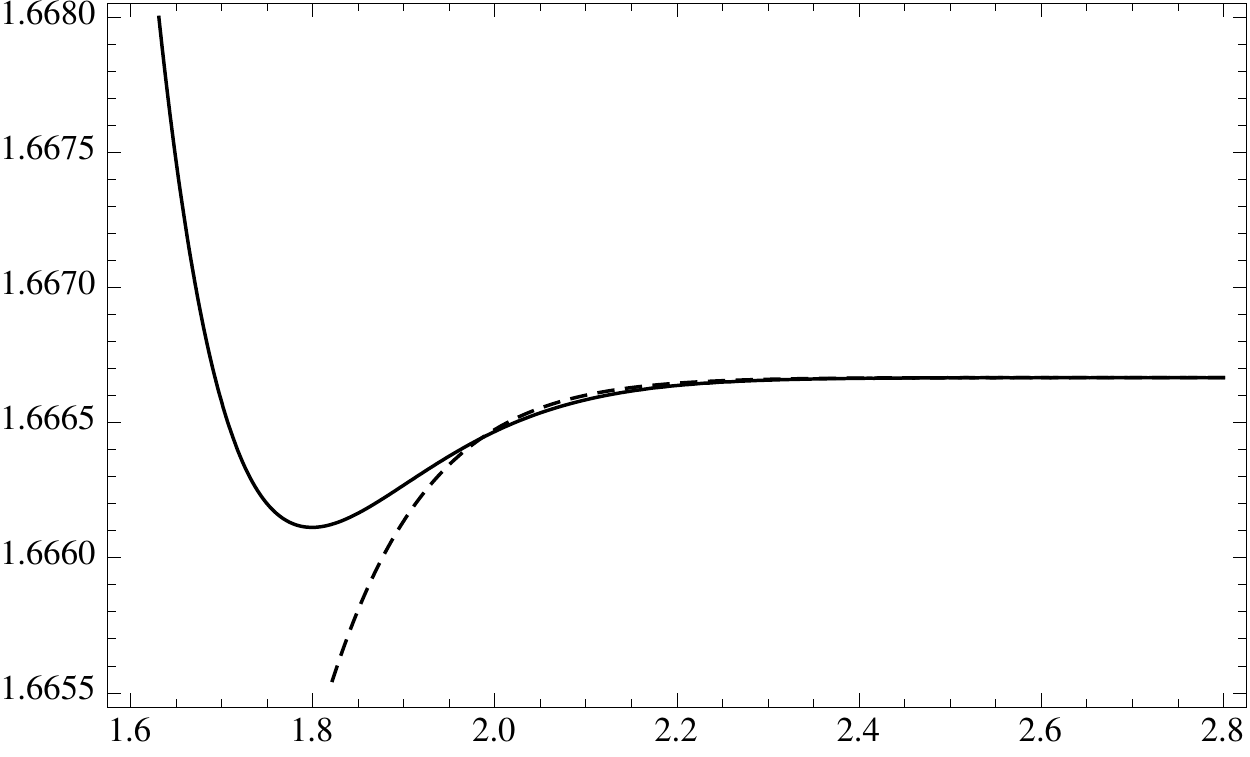}
\caption{Zoom near $\lambda=\frac{5}{3}$ on the band function $k\mapsto \lambda_{\gb,4}(k)$ for $\gb=(-\frac{1}{3},1)$. In dash line: the asymptotics from Therorem \ref{magnetic-step}, case (ib). Here $n=3$.}
\label{F:3}
\end{figure}

\begin{figure}[htbp]
\centering
\includegraphics[width=10cm]{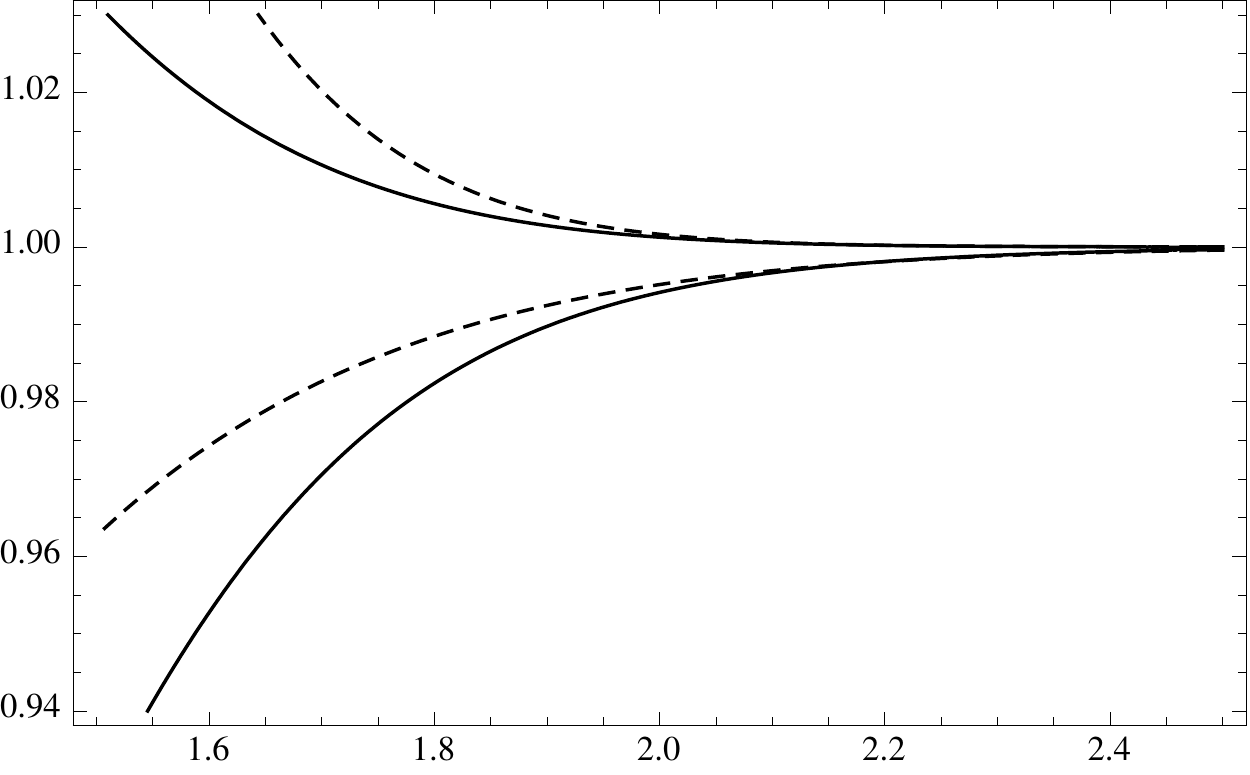}
\caption{Zoom near $\lambda=1$ on the band functions $k\mapsto \lambda_{\gb,p}(k)$ for $\gb=(-\frac{1}{3},1)$ and $p=2,3$ (splitting case). In dash line: the asymptotics from Therorem \ref{magnetic-step}, case (ii). Here $\lambda=1$, $n=1$ and $m=2$.}
\label{F:4}
\end{figure}

\newpage

\subsection{Consequence of the analysis of the band functions}
In this section we qualitatively present possible consequences of our results.
\paragraph{Standard perturbation of the Hamiltonian}
If we perturb the Hamiltonian $\mathcal{L}_{\gb}$ by a suitable electric potential, discrete spectrum might appear under the essential spectrum, which is the infimum of the first band function. In particular, the existence of a minimum is useful if one wants to study the counting function of created eigenvalues. Here we know that the infimum is reached for some finite frequency $k^{*}$. Then, following the standard procedure described for example in \cite{Rai92}, the number of discrete eigenvalues can be described, depending on the degeneracy of the minimum and on the decay of the electric perturbation.

In the same way, it is possible to derive a limiting absorption principle at energies close to the bottom of the spectrum of $\mathcal{L}_{\gb}$ (and also near each band minimum), as in \cite{Soc01}. The case of thresholds corresponding to the limits of band functions can also be studied in the spirit of~\cite{PofSoc14}. In particular the asymptotics of the band functions for large $k$ allow to describe the suitable functional spaces when writing a limiting absorption principle near the Landau levels.

\paragraph{Semiclassical magnetic Laplacian with piecewise constant magnetic field}

Let us consider the semiclassical magnetic Laplacian $(-i\hbar\nabla+\bA)^2$ in the plane with a magnetic field such that
\[
\nabla\times\bA=\bB=b_{1}\mathds{1}_{\Omega}+b_{2}\mathds{1}_{\complement\Omega},
\]
where $\Omega$ is a smooth and bounded domain. It is well-known that the semiclassical analysis of the low lying spectrum (motivated by the analysis of the third critical field in the Ginzburg--Landau theory) is governed by the spectral behavior of some local models (see \cite{FouHel10, BDP14}). When the magnetic field is not continuous, models operators in the form of $\mathcal{L}_{\gb}$ have to be considered. It is known that the minima (which are often non degenerate) of the corresponding band functions may combine with the curvature (of the boundary for instance) to generate the semiclassical asymptotics. In particular, the existence of a minimum of the band functions implies a microlocalization of the magnetic eigenfunctions (see \cite{FouHel10, Ray14, BHR14}).

\paragraph{Bulk states localized in energy near the thresholds}
In general Iwatsuka Hamiltonians (\textit{i.e.} Hamiltonians associated with magnetic fields depending only on one variable), the analysis of quantum states submitted to the magnetic field and localized in energy in an interval $I$ depends on the behavior of the band functions when crossing the energy interval $I$, see~\cite{ManPur97}. For example if $I$ does not contain any threshold, then the quantum states are localized in space near the variations of the magnetic field (here, the jump line $\{x=0\}$) and their current (defined below) is not small. This corresponds to the fact that a classical charged particle moves along the edge $\{x=0\}$, in a \enquote{snake orbit}.

One may also show, as in \cite{dBiePu99,HisPofSoc14}, that there exist states, called bulk states, localized in energy near the thresholds, bearing small current. If the threshold corresponds to a limit of a band function, these bulk states are exponentially small near the jump line. The precise (de)localization of these states and the current they bear can be derived from the asymptotics of the band function by the following method (see \cite{HisPofSoc14}).  Assume that $I=(\lambda-\delta,\lambda+\delta)$ where $\lambda\in \mathfrak{S}$ and $\delta\downarrow 0$. The current of a state $\psi\in \sL^2(\R)$ along the jump line is given by $\langle J_{y}\psi,\psi \rangle$ where $J_{y}:=\frac{i}{2} [\mathcal{L}_{\gb},y]=-i\partial_{y}-a_{\gb}(x)$ is the current operator. This quantity may be linked to the derivative of the band functions by using the Feynman--Hellmann formula. Using the asymptotics from Theorem \ref{magnetic-step}, one may show that the derivative of the band functions is small on the large $k$ interval corresponding to the values where $\lambda_{n}(k) \in I$. This would provide an upper bound on the current carried by bulk states, depending on how far from the threshold is their energy. Then follows an analysis in phase space of these states in order to show that delocalization in the frequency $k$ involves delocalization in the $x$-variable.

Note finally that the sign of the derivative of the band function is related to the direction of propagation of the particle along the jump line of the magnetic field. In particular quantum states related to non-monotonic band functions (see Corollary \ref{C:non-mono}) may flow along the line in two possible directions. This corresponds to the possible \emph{negative velocity} described in \cite{PeetRej}. We refer to~\cite{Yaf08} for an analysis of the direction of propagation of quantum states in a translation invariant magnetic field.


\section{Magnetic wall}\label{sec:mag-wall1}

This section is devoted to the case $\gb=(0,1)$. Away from the edge at $x=0$, a classical electron moves in a straight line for $x < 0$
and in a closed circular orbit for $x > 0$. If the orbit intersects the edge, then the classical electron escapes to $x = - \infty$.
The absence of magnetic field for $x<0$ results in a half-line of essential spectrum for the quantum electron.
As we will see, the spectrum varies with the sign of $k$.

The reduced potential $V_{\gb}$ satisfies $V_{\gb}(x,k)=k^2$ for $x\leq 0$, and 
$V_{\gb}(x,k)\to+\infty$ as $x\to+\infty$.
As a consequence, we easily deduce the following proposition.
\begin{proposition}
Assume that $\gb=(0,1)$ and $k\in\R$. The essential spectrum of $\gh_{b}(k)$ is given by
\[
\gS_{\ess}(\gh_{b}(k))=[k^2,+\infty).
\]
Moreover, $\gS(\mathcal{L}_{\gb})=[0,+\infty)$.
\end{proposition}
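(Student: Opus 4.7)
The plan is to fix $k \in \R$ and analyse the fiber operator $\gh_{\gb}(k) = D_x^2 + V_{\gb}(x,k)$, whose effective potential equals the constant $k^2$ on the left half-line and grows like $(k-x)^2 \to +\infty$ on the right half-line. The two inclusions giving $\gS_{\ess}(\gh_{\gb}(k)) = [k^2, +\infty)$ are treated separately, and the identity for the full spectrum of $\mathcal{L}_{\gb}$ then follows directly from the fiber decomposition.

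For the inclusion $[k^2, +\infty) \subseteq \gS_{\ess}(\gh_{\gb}(k))$, I would apply Weyl's criterion by exploiting the flat left half-line where $V_{\gb}(\cdot, k) \equiv k^2$. Given $\lambda \geq k^2$, set $\xi := \sqrt{\lambda - k^2}$. Pick a cutoff $\chi \in C_c^\infty(\R)$ supported in $[-2,-1]$ with $\|\chi\|_{\sL^2} = 1$, and define the translated, oscillating sequence
\[
\psi_n(x) = n^{-1/2}\,\chi\!\left(\tfrac{x + 3n}{n}\right) e^{i \xi x}.
\]
These functions have unit $\sL^2$-norm, are supported in $[-5n,-2n] \subset \R_-$, and converge weakly to zero. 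Since $V_{\gb}(x,k) = k^2$ on their supports, a short computation yields $\|(\gh_{\gb}(k) - \lambda)\psi_n\|_{\sL^2} = O(n^{-1}) \to 0$, producing a singular Weyl sequence at every $\lambda \in [k^2, +\infty)$.

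For the reverse inclusion, I would invoke Persson's characterization,
\[
\inf \gS_{\ess}(\gh_{\gb}(k)) = \lim_{R \to +\infty} \; \inf_{\substack{\psi \in C_c^\infty(\R \setminus [-R,R]) \\ \|\psi\|=1}} \gq_{\gb}(k)(\psi),
\]
and split the minimisation according to whether $\psi$ is supported in $(-\infty, -R)$ or in $(R, +\infty)$. In the first case $V_{\gb}(x,k) = k^2$, so $\gq_{\gb}(k)(\psi) \geq k^2 \|\psi\|^2$. In the second case $V_{\gb}(x,k) = (k-x)^2 \geq (R - |k|)^2$ for $R > |k|$, so the Rayleigh quotient tends to $+\infty$ as $R \to +\infty$. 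Hence $\inf \gS_{\ess}(\gh_{\gb}(k)) \geq k^2$. Alternatively, one could use Dirichlet bracketing at $x=0$: the right-hand piece has compact resolvent (confining potential), whereas the left-hand piece is unitarily equivalent to $D_x^2 + k^2$ on a half-line, which has essential spectrum exactly $[k^2, +\infty)$.

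Finally, the statement $\gS(\mathcal{L}_{\gb}) = [0, +\infty)$ follows from the identity $\gS(\mathcal{L}_{\gb}) = \overline{\bigcup_{k \in \R} \gS(\gh_{\gb}(k))}$: the essential spectra already fill $\bigcup_k [k^2, +\infty) = [0, +\infty)$, while nonnegativity of $\mathcal{L}_{\gb} = (-i\nabla - \bA)^2$ prevents any spectrum below $0$. The only delicate point in the whole argument is the bookkeeping in the Weyl construction — ensuring the oscillating test functions are genuinely supported where the potential is constant and that the error from differentiating the slowly varying cutoff is $o(1)$ — but this is routine once the scales in $\psi_n$ are chosen so that the derivatives of $\chi$ produce $n^{-1}$ factors.
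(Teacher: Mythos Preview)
Your proof is correct and essentially follows the same route as the paper, which simply observes that $V_{\gb}(x,k)=k^2$ on $\R_-$ and $V_{\gb}(x,k)\to+\infty$ as $x\to+\infty$, then declares the result an easy consequence. You have spelled out the standard Weyl-sequence/Persson (or Dirichlet bracketing) argument that the paper leaves implicit, and your derivation of $\gS(\mathcal{L}_{\gb})=[0,+\infty)$ from the fiber formula is exactly what is intended.
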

In fact, we can prove slightly more.
\begin{proposition}
Assume that $\gb=(0,1)$ and $k\in\R$. The operator $\gh_{\gb}(k)$ has no embedded eigenvalues in its essential spectrum.
\end{proposition}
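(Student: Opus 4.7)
The plan is to show that if $\lambda \geq k^2$ is an eigenvalue of $\gh_{\gb}(k)$ with eigenfunction $\psi$, then $\psi$ must vanish identically on $(-\infty,0]$, after which a Cauchy uniqueness argument on $(0,+\infty)$ forces $\psi\equiv 0$.

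First, I would examine the equation on the left half-line. For $x<0$, the eigenvalue equation reduces to $-\psi''+k^2\psi=\lambda\psi$, i.e.\ $\psi''=(k^2-\lambda)\psi$, which is a constant coefficient ODE. When $\lambda>k^2$, every solution is a bounded oscillation $A\cos(\omega x)+B\sin(\omega x)$ with $\omega=\sqrt{\lambda-k^2}>0$, and the only such function lying in $\sL^2((-\infty,0])$ is the zero function. When $\lambda=k^2$, solutions are affine and again the only $\sL^2$-solution on $(-\infty,0]$ is zero. Hence $\psi\equiv 0$ on $(-\infty,0]$.

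Next I would transfer this information across the interface at $x=0$. Because $V_{\gb}(\cdot,k)$ is locally bounded, any eigenfunction belongs to $\sH^2_{\loc}(\R)$, so $\psi$ and $\psi'$ are continuous at $0$. Combined with the previous step, this yields the Cauchy data $\psi(0)=\psi'(0)=0$. On the right half-line the eigenvalue equation is the second order linear ODE $-\psi''+(k-x)^2\psi=\lambda\psi$ with smooth coefficients, so the standard uniqueness theorem for such ODEs propagates the zero Cauchy data and forces $\psi\equiv 0$ on $[0,+\infty)$ as well. Together this contradicts $\psi$ being a nontrivial eigenfunction, proving the absence of embedded eigenvalues.

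I do not anticipate a real obstacle here: the whole argument is essentially the observation that the free Schrödinger operator on the half-line $(-\infty,0]$ (with the constant potential $k^2$) has no $\sL^2$ solutions at energies in its essential spectrum, plus ODE uniqueness on the right. The only point requiring minimal care is justifying the continuity of $\psi'$ at the interface, which follows from the description of $\Dom(\gh_{\gb}(k))$ recalled earlier in the paper and the local boundedness of $V_{\gb}(\cdot,k)$.
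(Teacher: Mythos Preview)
Your proposal is correct and essentially identical to the paper's own proof: both argue that on $(-\infty,0)$ the equation is $-\psi''=(\lambda-k^2)\psi$ with no nonzero $\sL^2$ solution for $\lambda\geq k^2$, then use $\sH^2_{\loc}$ regularity to get $\psi(0)=\psi'(0)=0$ and invoke Cauchy--Lipschitz on the right. The only cosmetic difference is that you explicitly split the cases $\lambda>k^2$ and $\lambda=k^2$, which the paper leaves implicit.
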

\begin{proof}
Let us consider $\lambda\geq k^2$ and $\psi\in \Dom(\gh_{\gb}(k))$ such that:
\begin{equation}\label{eve}
-\psi''+(k-a_{\gb}(x))^2\psi=\lambda \psi.
\end{equation}
For $x<0$, we have $-\psi''=(\lambda-k^2) \psi$ whose only solution in $\sL^2(\R_{-})$ is zero. But since the solutions of \eqref{eve} belongs to $\sH^2_{\loc}$ and are in $\mathcal{C}^1(\R)$, this implies that $\psi(0)=\psi'(0)=0$ and thus $\psi\equiv 0$ by the Cauchy--Lipschitz theorem.
\end{proof}
Let us now describe the discrete spectrum, that is the eigenvalues $\lambda<k^2$. Since, for $k\leq 0$, we have $\gq_{\gb}(k)\geq k^2$, we deduce the following proposition by the min-max principle.
\begin{proposition}
Assume that $\gb=(0,1)$ and $k\leq 0$. Then
\[
\gS(\gh_{\gb}(k))=\gS_{\ess}(\gh_{\gb}(k))=[k^2,+\infty).
\]
\end{proposition}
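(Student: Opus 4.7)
The proposition is the easy direction of the spectral picture: the essential spectrum is already known to be $[k^2,+\infty)$ by the preceding proposition, so it suffices to rule out any eigenvalue strictly below $k^2$. Following the hint in the text, my approach is to show that the effective potential $V_{\gb}(\cdot,k)$ is pointwise bounded below by $k^2$ when $k\leq 0$, so that the quadratic form satisfies $\gq_{\gb}(k)(\psi)\geq k^2\|\psi\|^2$, and then apply the min-max principle.

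Concretely, I split the real line according to the definition of $a_{\gb}$. For $x\leq 0$ one has $a_{\gb}(x)=0$ and hence $V_{\gb}(x,k)=k^2$ exactly. For $x>0$ one has $a_{\gb}(x)=x$, and
\[
V_{\gb}(x,k)=(k-x)^2=k^2-2kx+x^2\geq k^2,
\]
since $-2kx\geq 0$ (using $k\leq 0$ and $x>0$) and $x^2\geq 0$. Therefore $V_{\gb}(x,k)\geq k^2$ for every $x\in\R$, and integration yields
\[
\gq_{\gb}(k)(\psi)=\int_{\R}\bigl(|\psi'(x)|^2+V_{\gb}(x,k)|\psi(x)|^2\bigr)\dx x\geq k^2\|\psi\|^2.
\]

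By the min-max principle this gives $\inf\gS(\gh_{\gb}(k))\geq k^2$. Combined with the previous proposition which identifies $\gS_{\ess}(\gh_{\gb}(k))=[k^2,+\infty)$, I conclude $\gS(\gh_{\gb}(k))=\gS_{\ess}(\gh_{\gb}(k))=[k^2,+\infty)$. There is no genuine obstacle: the content of the statement is simply that the sign constraint $k\leq 0$ prevents the effective potential well on $\{x>0\}$ from dipping below $k^2$, which would be necessary to create a bound state.
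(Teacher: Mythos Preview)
Your proof is correct and follows exactly the approach sketched in the paper: the pointwise bound $V_{\gb}(\cdot,k)\geq k^2$ for $k\leq 0$ yields $\gq_{\gb}(k)(\psi)\geq k^2\|\psi\|^2$, and the min-max principle together with the already-established essential spectrum gives the conclusion. The paper states this in a single line; your version simply fills in the verification of the potential bound on each half-line.
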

Therefore we must only analyze the case when $k>0$. The following lemma is a reformulation of the eigenvalue problem.
\begin{lemma}\label{lemma-trans}
Assume that $\gb=(0,1)$ and $k>0$. The number $\lambda<k^2$ is an eigenvalue of $\gh_{\gb}(k)$ if and only if
there exists a non zero function $\psi\in\sL^2(\R_{+})$ satisfying
\begin{subnumcases}{}
-\psi''(x)+(x-k)^2\psi=\lambda \psi(x),\\
\psi'(0)-\sqrt{k^2-\lambda}\, \psi(0)=0\, .
\end{subnumcases}
Moreover, the eigenfunctions of $\gh_{\gb}(k)$ can only vanish on $\R_{+}$. The eigenvalues of $\gh_{\gb}(k)$ are simple.
\end{lemma}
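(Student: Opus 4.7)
The plan is to reduce the eigenvalue equation on the whole line to a problem posed on $\R_+$, by solving it explicitly on $\R_-$. On $\{x<0\}$ the potential satisfies $V_{\gb}(x,k)=k^2$, so an eigenfunction at energy $\lambda<k^2$ solves $-\psi''=-\kappa^2\psi$ with $\kappa:=\sqrt{k^2-\lambda}>0$. The general solution is a linear combination of $e^{\pm\kappa x}$, and only the exponentially decaying one belongs to $\sL^2(\R_-)$; hence $\psi(x)=\psi(0)e^{\kappa x}$ for $x<0$, which in particular gives $\psi'(0^-)=\kappa\,\psi(0)$.

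Since any element of $\Dom(\gh_{\gb}(k))$ lies in $\sH^2_{\loc}(\R)$, the function $\psi$ and its derivative are continuous at the origin, so this computation propagates to the right and yields exactly the Robin condition $\psi'(0)=\sqrt{k^2-\lambda}\,\psi(0)$. Restricting $\psi$ to $\R_+$ then gives the desired solution of the half-line problem. For the converse, given a nontrivial $\psi\in\sL^2(\R_+)$ solving the ODE together with the boundary condition, I would extend it to $\R_-$ by $\psi(0)e^{\kappa x}$: the Robin condition is precisely what ensures the $\mathcal{C}^1$-matching at $x=0$, and the extension solves $-\Psi''+V_{\gb}(\cdot,k)\Psi=\lambda\Psi$ on each half-line, hence defines an element of $\Dom(\gh_{\gb}(k))$ which is an eigenfunction at energy $\lambda$.

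For the remaining assertions, note first that on $\R_-$ any eigenfunction is a nonzero multiple of $e^{\kappa x}$ and therefore vanishes nowhere; its zero set is consequently contained in $\R_+$. Moreover, if one had $\psi(0)=0$, then $\psi'(0)=0$ as well by the Robin condition, and the Cauchy--Lipschitz theorem applied to the ODE on $\R_+$ would force $\psi\equiv 0$, a contradiction. Finally, the quadratic growth of $V_{\gb}(x,k)$ as $x\to+\infty$ places the Sturm--Liouville problem at infinity in the limit-point case, so the subspace of $\sL^2(\R_+)$-solutions of the ODE is one-dimensional; the Robin condition then determines the eigenfunction up to a scalar, which proves simplicity.

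The main obstacle is essentially conceptual: cleanly translating the global eigenvalue problem into a Robin boundary-value problem on the half-line and verifying that the extension across $x=0$ genuinely belongs to the operator domain. The computations themselves are elementary, and the only external input needed for simplicity is the standard limit-point character of a Schr\"odinger operator with confining quadratic potential at infinity.
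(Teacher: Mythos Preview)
Your proof is correct and follows essentially the same route as the paper: solve the constant-coefficient equation on $\R_-$, keep only the $\sL^2$ branch $e^{\kappa x}$, and use the $\mathcal{C}^1$ matching at $x=0$ to obtain the Robin condition; the non-vanishing on $\R_-$ and at $0$ is argued identically. The only cosmetic difference is the justification of simplicity: the paper appeals directly to the Cauchy--Lipschitz theorem (two eigenfunctions are proportional on $\R_-$, hence have proportional Cauchy data at $0$, hence coincide on $\R_+$), whereas you phrase it via the limit-point character at $+\infty$; these are equivalent one-line arguments.
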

\begin{proof}
We consider $\gh_{\gb}(k)\psi=\lambda\psi$. Since $\psi\in\sL^2(\R_{-})$, there exists $A\in\R$ such that, for $x\leq 0$, $\psi(x)=Ae^{x\sqrt{k^2-\lambda}}$. Then we have to solve $-\psi''+V_{\gb}(x,k)\psi=\lambda\psi$ for $x\geq 0$ with the transmission conditions
\[
\psi(0)=A \ \ \text{and} \ \ \psi'(0)=A\sqrt{k^2-\lambda} \,
\]
or equivalently
\[
\psi'(0)-\sqrt{k^2-\lambda}\,\psi(0)=0.
\]
In particular, $A$ cannot be zero. The simplicity is a consequence of the Cauchy--Lipschitz theorem.
\end{proof}

\begin{lemma}\label{non-decreasing}
Assume that $\gb=(0,1)$. The functions $\R_{+}\ni k\mapsto \lambda_{\gb,n}(k)$ are non decreasing.
\end{lemma}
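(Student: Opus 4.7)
The plan is to use a change of variables to exhibit the fiber operator as a Schrödinger operator whose potential is pointwise non-decreasing in $k>0$, and then invoke the min-max principle.

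First I would apply the unitary translation $U_k : \phi \mapsto \phi(\cdot + k)$ on $\sL^2(\R)$ to the fiber operator $\gh_{\gb}(k) = D_x^2 + V_{\gb}(x,k)$. Setting $y = x-k$, one computes directly that $U_k^{-1}\gh_{\gb}(k)U_k = D_y^2 + W_k(y)$, where
\[
W_k(y) = \begin{cases} k^2, & y<-k,\\ y^2, & y>-k. \end{cases}
\]
Indeed, for $x<0$ (i.e.\ $y<-k$) the potential $V_{\gb}$ equals $k^2$, while for $x>0$ (i.e.\ $y>-k$) we have $V_{\gb}(x,k) = (x-k)^2 = y^2$. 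The translation is unitary, so it preserves all Rayleigh quotients, and hence $\lambda_{\gb,n}(k)$ is the $n$-th Rayleigh quotient of $D_y^2 + W_k$.

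Next I would check the key pointwise inequality: for $0<k<k'$ one has $W_k(y) \leq W_{k'}(y)$ for every $y\in\R$. Splitting into three regions:
\[
\text{(a) } y>-k,\quad \text{(b) } -k'<y\leq -k,\quad \text{(c) } y\leq -k',
\]
in (a) both potentials equal $y^2$; in (b), $W_k(y)=k^2$ while $W_{k'}(y)=y^2\geq k^2$ since $|y|\geq k$; in (c), $W_k(y)=k^2\leq k'^2=W_{k'}(y)$. In all three cases $W_{k'}\geq W_k$.

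The conclusion then follows directly from the min-max principle: if $\tilde\gq_k$ denotes the quadratic form of $D_y^2 + W_k$, then $\tilde\gq_{k'}(\phi) \geq \tilde\gq_k(\phi)$ on the common form core (e.g.\ $C_0^\infty(\R)$), so the $n$-th Rayleigh quotient is monotone in $k$, proving $\lambda_{\gb,n}(k') \geq \lambda_{\gb,n}(k)$. There is no real obstacle here; the only thing to be mildly careful about is that the domain of the form does not itself depend on $k$ after the translation, which is immediate since $W_k$ is uniformly locally bounded and the form domain is simply $\sH^1(\R)$ intersected with the weight space of $y^2$-integrability near $+\infty$, independent of $k$.
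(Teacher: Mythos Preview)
Your proof is correct and follows essentially the same strategy as the paper: translate by $k$ so that the potential becomes $W_k(y)=k^2$ on $(-\infty,-k)$ and $y^2$ on $(-k,+\infty)$, verify pointwise monotonicity in $k$, and conclude by the min-max principle. (One harmless slip: with $U_k\phi=\phi(\cdot+k)$ the conjugation yielding $D_y^2+W_k$ is $U_k\,\gh_{\gb}(k)\,U_k^{-1}$, not $U_k^{-1}\gh_{\gb}(k)U_k$; your formula for $W_k$ and the subsequent argument are nonetheless correct.)
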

\begin{proof}
We use the translation $x=y+k$ to see that $\gh_{b}(k)$ is unitarily equivalent to $D_{y}^2+\tilde V(y,k)$ with $\tilde V(y,k)=\one_{(-\infty,-k)}(y)k^2+\one_{(-k,+\infty)}(y)y^2$. For $0<k_{1}<k_{2}$, we have:
\begin{align*}
\tilde V(y,k_{2})-\tilde V(y,k_{1})&=\one_{(-k_{2},-k_{1})}(y)y^2+\one_{(-\infty,-k_{2})}(y)k_{2}^2-\one_{(-\infty,-k_{1})}(y)k_{1}^2\\
&\geq \one_{(-k_{2},-k_{1})}(y)k_{1}^2+\one_{(-\infty,-k_{2})}(y)k_{2}^2-\one_{(-\infty,-k_{1})}(y)k_{1}^2\\
&=\one_{(-\infty,-k_{2})}(y)k_{2}^2-\one_{(-\infty,-k_{2})}(y)k_{1}^2\geq 0.
\end{align*}
By the min-max principle, we infer the desired monotonicity.
\end{proof}
The next lemma is a consequence of the Sturm--Liouville theory (see for instance \cite{Si05}).
\begin{lemma}
\label{L:sturmL}
Assume that $\gb=(0,1)$, and let $n\in\N$ be such that $\lambda_{\gb,n}(k)<k^2$. Then, the corresponding eigenspace is one dimensional and is generated by a normalized function $\psi_{\gb,n}(k)$, depending analytically on $k$. Moreover, $\psi_{\gb,n}(k)$ has exactly $n-1$ zeros, all located on the positive half line.
\end{lemma}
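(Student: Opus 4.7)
The plan is to establish the three claims (one-dimensionality of each eigenspace, analytic dependence on $k$, and the precise zero count) in turn, exploiting the structural information already provided by Lemma~\ref{lemma-trans}.

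For the \emph{one-dimensionality} of the eigenspace associated with $\lambda_{\gb,n}(k) < k^2$, I would simply reread Lemma~\ref{lemma-trans}: an $\sL^2$-eigenfunction $\psi$ necessarily takes the form $\psi(x) = A e^{x\sqrt{k^2-\lambda}}$ on $(-\infty,0]$, so the Cauchy datum $(\psi(0),\psi'(0)) = (A, A\sqrt{k^2-\lambda})$ is proportional to the single scalar $A$, and the Cauchy--Lipschitz theorem determines $\psi$ on $\R_+$ uniquely up to this scalar. Normalizing $\|\psi_{\gb,n}(k)\|_{\sL^2(\R)} = 1$ and fixing the sign of $A$ selects a canonical eigenfunction.

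For the \emph{analyticity} in $k$, I would present the family $(\gh_{\gb}(k))_{k \in \R}$ as a self-adjoint analytic family of type (B) in the sense of Kato. Indeed, the quadratic form
\[
\gq_{\gb}(k)(\psi) = \int_{\R} |\psi'|^2 \dx x + \int_{\R} (k - a_{\gb}(x))^2 |\psi|^2 \dx x
\]
is a quadratic polynomial in $k$ after expansion, and its form domain $\{\psi \in \sH^1(\R) : a_{\gb}\psi \in \sL^2(\R)\}$ is independent of $k$. Complexifying $k$, the real part of $(k - a_{\gb}(x))^2$ is bounded below uniformly in $x$ on any compact set of $\C$, so the form is sectorial and depends holomorphically on $k \in \C$. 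On the open set of $k > 0$ where $\lambda_{\gb,n}(k) < k^2$, the eigenvalue is a simple isolated discrete eigenvalue, and Kato's perturbation theory supplies a real analytic choice of $(k, \psi_{\gb,n}(k))$ with values in $\sL^2(\R)$.

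For the \emph{nodal count}, I would invoke the classical Sturm oscillation theorem for one-dimensional Schr\"odinger operators, as surveyed in~\cite{Si05}. The potential $V_{\gb}(\cdot,k)$ is continuous on $\R$ (equal to $k^2$ on both sides of $x=0$), the operator $\gh_{\gb}(k)$ is in the limit-point case at $\pm\infty$, and its eigenvalues below $k^2$ are simple by the previous step; hence the $n$-th eigenfunction carries exactly $n-1$ zeros on $\R$. Since $\psi_{\gb,n}(k)$ equals the nonzero exponential $A e^{x\sqrt{k^2 - \lambda_{\gb,n}(k)}}$ on $(-\infty, 0]$ with $A \neq 0$, none of these zeros can lie in $(-\infty,0]$, so all $n-1$ zeros belong to $(0, +\infty)$. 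The only mildly delicate point is matching the eigenvalue index in the Sturm theorem with our index $n$, which follows from the simplicity established in Lemma~\ref{lemma-trans} together with the fact that Sturm comparison on $\R$ is insensitive to the jump in $\partial_x V_{\gb}$ at $x=0$ since $V_{\gb}$ itself is continuous there.
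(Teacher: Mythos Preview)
Your proposal is correct and follows essentially the same line as the paper: simplicity via the Cauchy--Lipschitz argument of Lemma~\ref{lemma-trans}, zeros confined to $\R_+$ because the eigenfunction is a nonvanishing exponential on $\R_-$, and the count $n-1$ via Sturm oscillation. The only differences are cosmetic: the paper sketches the Sturm argument by hand (min-max on the span of the pieces between zeros, then a Wronskian interlacing argument), while you invoke~\cite{Si05} directly; and you spell out the Kato type-(B) justification for analyticity, which the paper simply asserts.
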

\begin{proof}
We have only to explain the part of the statement concerned with the zeros. Thanks to Lemma \ref{lemma-trans}, one knows that the zeros are necessarily positive. Let us briefly recall the main lines of the Sturm's oscillation theorem. Let us denote by $m-1$ the number of zeros of $\psi=\psi_{\gb,n}(k)$ and let us check that $m\leq n$. For $j=1,\ldots,m-1$, $z_{n,j}$ denotes the $j$-th zero of $\psi_{\gb,n}(k)$  (the zeros are simple and isolated by the Cauchy--Lipschitz theorem) and we let $\varphi_{j}=\psi_{|(z_{n,j}, z_{n,j+1})}$ extended by zero. By definition,
\[
\gq_{\gb}(k)(\varphi_{j},\varphi_{\ell})=\int_{\R} \varphi_{j}' \varphi_{\ell}'+V_{\gb}(x,k) \varphi_{j} \varphi_{\ell} \dx x,
\]
so,
\[
\gq_{\gb}(k)(\varphi_{j},\varphi_{\ell})=\delta_{j,\ell} \int_{\R} \varphi_{j}'^2+V_{\gb}(x,k) \varphi_{j}^2 \dx x=\lambda_{\gb,n}\delta_{j,\ell}\|\varphi_{j}\|^2.
\]
We apply the min-max principle to the space $F_{m}=\underset{1\leq j\leq m}{\spann}\varphi_{j}$ to get $\lambda_{\gb,m}\leq \lambda_{\gb,n}$ and thus $m\leq n$. In particular we recover that $\psi_{\gb,1}$ does not vanish. Let us explain why $\psi_{\gb, 2}$ vanishes exactly once. Introducing the Wronskian $W_{1,2}=\psi_{1}\psi'_{2}-\psi_{2}\psi_{1}'$, we notice that $W'_{1,2}=(\lambda_{\gb,1}-\lambda_{\gb,2})\psi_{\gb,1}\psi_{\gb,2}$. If $\psi_{\gb,2}>0$, then $W_{1,2}'<0$. Since the eigenfunctions belong to the Schwartz class at infinity, $W_{1,2}(x)\to0$ as $|x|\to+\infty$. This is a contradiction. With the same kind of monotonicity arguments, we can prove that the zeros of the eigenfunctions are interlaced and that $\psi_{\gb,n}$ has exactly $n-1$ zeros.
\end{proof}

By using the harmonic approximation in the semiclassical limit (see \cite{DiSj99} and the Section \ref{subsec:band-fncs1} below), we can prove the following lemma.
\begin{lemma}\label{limit}
Assume that $\gb=(0,1)$.
For all $n\geq 1$, we have
\[
\lim_{k\to+\infty}\lambda_{\gb,n}(k)=E_{n}.
\]
In particular, for $k$ large enough, we have $\lambda_{\gb,n}(k)\leq E_{n}<k^2$.
\end{lemma}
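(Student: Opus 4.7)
The heuristic is that for large $k$, the effective potential $V_\gb(\cdot,k)$ has a single parabolic well centered at $x=k$, with the discontinuity at $x=0$ far from this well; hence $\gh_\gb(k)$ should be an exponentially small perturbation of a shifted harmonic oscillator on $\R$, whose eigenvalues are precisely the $E_n=2n-1$.

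\emph{Upper bound.} After the translation $x\mapsto y=x-k$ used in the proof of Lemma~\ref{non-decreasing}, $\gh_\gb(k)$ is unitarily equivalent to $\tilde\gh_\gb(k):=D_y^2+\tilde V(y,k)$ with $\tilde V(y,k)=y^2$ on $\{y>-k\}$ and $\tilde V(y,k)=k^2\leq y^2$ on $\{y\leq -k\}$. I would compare the quadratic form with that of the full harmonic oscillator $H_0=D_y^2+y^2$ on $\R$ and apply the min-max principle, obtaining the uniform bound $\lambda_{\gb,n}(k)\leq E_n$ for every $n\geq 1$ and every $k>0$. Combined with the monotonicity in $k$ from Lemma~\ref{non-decreasing}, this ensures that the limit $\lambda_{\gb,n}^\infty:=\lim_{k\to+\infty}\lambda_{\gb,n}(k)$ exists and satisfies $\lambda_{\gb,n}^\infty\leq E_n$.

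\emph{Lower bound via compactness.} For $k$ large, $E_n<k^2$, so $\tilde\gh_\gb(k)$ has $n$ orthonormal eigenfunctions $\psi_j^{(k)}$, $j=1,\ldots,n$, with eigenvalues $\lambda_{\gb,j}(k)\leq E_j$. On $\{y<-k\}$ the equation $-\psi''+k^2\psi=\lambda_{\gb,j}(k)\psi$ has the explicit $L^2$-solution
\[
\psi_j^{(k)}(y)=\psi_j^{(k)}(-k)\,e^{(y+k)\sqrt{k^2-\lambda_{\gb,j}(k)}},
\]
from which both the $L^2$ and $L^2(y^2\dx y)$ masses of $\psi_j^{(k)}$ on that half-line are seen to be exponentially small in $k$; the quadratic form identity then yields a uniform bound of $\psi_j^{(k)}$ in $\sH^1(\R)\cap \sL^2(\R,y^2\dx y)$. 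Since this space embeds compactly into $\sL^2(\R)$, a diagonal extraction gives a subsequence along which each $\psi_j^{(k_m)}$ converges strongly in $\sL^2(\R)$ to some $\psi_j^\infty$, and orthonormality is preserved. For any $\varphi\in C_c^\infty(\R)$ the support lies eventually in $\{y>-k\}$, where $\tilde V=y^2$, so the weak eigenvalue equations pass to the limit and identify each $\psi_j^\infty$ as an eigenfunction of $H_0$ with eigenvalue $\lambda_{\gb,j}^\infty\in\{E_m:m\geq 1\}$. By simplicity of the eigenvalues of $\gh_\gb(k)$ the sequence $\lambda_{\gb,j}^\infty$ is nondecreasing, and each limit $\psi_j^\infty$ lies in some one-dimensional eigenspace $\ker(H_0-E_{m_j})$ with $m_j\leq j$; the orthonormality of $\psi_1^\infty,\ldots,\psi_n^\infty$ forces the indices $m_j$ to be distinct, hence $m_j=j$ and $\lambda_{\gb,j}^\infty=E_j$ for $1\leq j\leq n$. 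The last assertion of the lemma follows at once since $E_n$ is a fixed finite constant and $k^2\to+\infty$.

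The main obstacle I foresee is to secure the strong $\sL^2$ compactness of the eigenfunctions: the combination of the weighted $\sL^2$ bound from the quadratic form and the explicit exponential decay on the plateau $\{y<-k\}$ is precisely what delivers this. A more quantitative alternative, in the spirit of the Helffer--Sj\"ostrand harmonic approximation (\cite{DiSj99}), would build quasimodes by truncating the shifted Hermite functions, producing an error of order $O(k^Ne^{-k^2})$ and yielding sharp rates of convergence.
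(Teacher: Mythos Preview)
Your argument is sound and reaches the conclusion, but it differs from what the paper does. The paper gives no detailed proof of this lemma: it simply points to the semiclassical harmonic approximation (the rescaling $x=ky$, $\hbar=k^{-2}$, and the multiple-well machinery recalled later in Lemma~\ref{set-of-limits}). Your route is instead a direct, hands-on compactness argument: the form comparison $\tilde V(\cdot,k)\le y^2$ gives $\lambda_{\gb,n}(k)\le E_n$ immediately, and then you extract strong $\sL^2$-limits of the eigenfunctions and identify them as Hermite functions. This is more elementary (no Agmon estimates, no semiclassical rescaling) and is self-contained within the tools already developed in Section~\ref{sec:mag-wall1}; the price is that it yields no rate of convergence, which you correctly flag in your last paragraph as something the harmonic-approximation approach would provide.

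One imprecision to fix: the claim that the $\sL^2$ and $\sL^2(y^2\dx y)$ masses of $\psi_j^{(k)}$ on $\{y<-k\}$ are ``exponentially small in $k$'' is not justified by the explicit exponential formula alone, since that formula still contains the unknown factor $|\psi_j^{(k)}(-k)|$. What you actually get from the quadratic form bound $\int_{\{y<-k\}} k^2|\psi_j^{(k)}|^2\le E_j$ is $\|\psi_j^{(k)}\|_{\sL^2(\{y<-k\})}^2=O(k^{-2})$, and then the explicit exponential profile on $\{y<-k\}$ upgrades this to a \emph{uniform bound} (not decay) on $\int_{\{y<-k\}} y^2|\psi_j^{(k)}|^2$. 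That uniform bound is exactly what you need for the compact embedding $\sH^1(\R)\cap\sL^2(\R,y^2\dx y)\hookrightarrow\sL^2(\R)$, so the rest of the argument goes through unchanged; just replace ``exponentially small'' by the honest $O(k^{-2})$ estimate.
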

Let us now prove that the $n$-th band function lies between the two consecutive Landau levels $E_{n-1}$ and $E_{n}$.
\begin{proposition}\label{strip}
Assume that $\gb=(0,1)$.
For all $n\geq 1$ and for all $k>0$ such that $\lambda_{\gb,n}(k)<k^2$ we have $\lambda_{\gb,n}(k)\in\left(E_{n-1},E_{n}\right)$.
\end{proposition}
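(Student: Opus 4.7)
The plan is to prove the two-sided estimate via the min-max principle, establishing the upper and lower bounds separately by comparison with two classical operators on the whole line and the half-line respectively.

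For the upper bound $\lambda_{\gb,n}(k)<E_{n}$, I would compare $\gh_{\gb}(k)$ with the shifted harmonic oscillator $H_{0}:=D_{x}^{2}+(x-k)^{2}$ on $\R$, whose eigenvalues are the Landau levels $E_{\ell}=2\ell-1$ with Hermite-type eigenfunctions $h_{\ell}(x)=H_{\ell-1}(x-k)e^{-(x-k)^{2}/2}$. The identity $(x-k)^{2}-k^{2}=x(x-2k)$ shows that $V_{\gb}(x,k)\leq(x-k)^{2}$ on $\R$, with strict inequality almost everywhere on $\R_{-}$ when $k>0$. I would then apply min-max with the test space $V_{n}^{0}:=\spann(h_{1},\ldots,h_{n})$: every nonzero $\psi\in V_{n}^{0}$ is a polynomial times a Gaussian, and hence is nonzero almost everywhere on $\R_{-}$, so $\langle\gh_{\gb}(k)\psi,\psi\rangle<\langle H_{0}\psi,\psi\rangle$. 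A compactness argument on the unit sphere of the finite-dimensional space $V_{n}^{0}$ turns this pointwise strict inequality into a strict inequality at the maximum, which by the min-max principle yields $\lambda_{\gb,n}(k)<E_{n}$.

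For the lower bound $\lambda_{\gb,n}(k)>E_{n-1}$, I would use Neumann bracketing at $x=0$. Let $\gh_{\gb}^{\rm N}$ denote the operator obtained by imposing a Neumann condition at the origin; its form domain $\sH^{1}(\R_{-})\oplus\sH^{1}(\R_{+})$ strictly contains that of $\gh_{\gb}(k)$, and the quadratic forms agree on the smaller domain, so $\gh_{\gb}^{\rm N}\leq\gh_{\gb}(k)$. This decoupled operator splits into the Neumann realization of $D_{x}^{2}+k^{2}$ on $\R_{-}$, which has purely essential spectrum $[k^{2},+\infty)$, and the de~Gennes Neumann operator $\gh^{\rm N}(k)$ on $\R_{+}$, whose eigenvalues are the $\mu_{\ell}^{\rm N}(k)$. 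The standing hypothesis $\lambda_{\gb,n}(k)<k^{2}$ forces the $n$-th Rayleigh quotient of $\gh_{\gb}^{\rm N}$ to lie strictly below $k^{2}$, hence to coincide with $\mu_{n}^{\rm N}(k)$, and min-max then gives $\mu_{n}^{\rm N}(k)\leq\lambda_{\gb,n}(k)$. Combining with the universal bound $\mu_{n}^{\rm N}(k)\geq\Theta_{n-1}>E_{n-1}$ recalled in Notation~\ref{ntn:theta1} closes the argument.

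The most delicate step is verifying that the $n$-th Rayleigh quotient of the decoupled Neumann operator really picks out $\mu_{n}^{\rm N}(k)$ rather than the essential-spectrum threshold $k^{2}$ inherited from the $\R_{-}$ half; this is handled cleanly by the hypothesis $\lambda_{\gb,n}(k)<k^{2}$ together with the bracketing inequality. Apart from this bookkeeping, everything reduces to standard quadratic-form comparisons on the whole line and on the half-line, plus the compactness argument on a finite-dimensional sphere needed to upgrade the pointwise strict inequality in the upper bound into a strict inequality at the maximum.
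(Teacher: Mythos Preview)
Your argument is correct, and it is genuinely different from the paper's on both halves.

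For the upper bound, the paper does not compare potentials directly; it combines the monotonicity of $k\mapsto\lambda_{\gb,n}(k)$ (Lemma~\ref{non-decreasing}) with the limit $\lambda_{\gb,n}(k)\to E_n$ (Lemma~\ref{limit}) and then invokes analyticity to force strict inequality.  Your direct comparison $V_{\gb}(\cdot,k)\leq(x-k)^2$ together with the min-max principle on $\spann(h_1,\ldots,h_n)$ is more self-contained: it does not require the preceding lemmas, and the strictness comes for free from the compactness of the finite-dimensional unit sphere.

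For the lower bound, the paper uses Sturm oscillation (Lemma~\ref{L:sturmL}): the eigenfunction $\psi_{\gb,n}$ has $n-1$ zeros on $\R_+$, and translating by the first zero $z_{n,1}(k)$ produces a Dirichlet eigenfunction of $\gh^{\rm D}(k-z_{n,1}(k))$ with $n-2$ zeros, hence the $(n-1)$-th one; this yields the \emph{identity} $\lambda_{\gb,n}(k)=\mu^{\rm D}_{n-1}(k-z_{n,1}(k))>E_{n-1}$.  Your Neumann bracketing instead gives the \emph{inequality} $\lambda_{\gb,n}(k)\geq\mu^{\rm N}_n(k)\geq\Theta_{n-1}>E_{n-1}$, which is enough for the proposition and avoids the oscillation machinery entirely.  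The trade-off is that the paper's route extracts a precise relation to the Dirichlet band functions that could be exploited further, whereas your route is quicker and uses only the standard bracketing toolkit plus the quoted properties of the Neumann de~Gennes minima.
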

\begin{proof}
By Lemmas~\ref{non-decreasing} and~\ref{limit}, $\lambda_{\gb,n}(k)<E_{n}$ (the strict inequality comes from the analyticity). It remains to prove that $\lambda_{\gb,n}(k)>E_{n-1}$. Clearly $\lambda_{\gb,1}(k)>E_{0}$. We rely on Notation~\ref{ntn:theta1}. For $n\geq 2$, we consider the function $\varphi_{n}(x)=\psi_{\gb, n}(x+z_{n,1}(k))$ which satisfies $\gh^{\rm D}(k-z_{n,1}(k))\varphi_{n}=\lambda_{\gb,n}(k)\varphi_{n}$ and has exactly $n-2$ zeros on $\R_{+}$, cf. Lemma~\ref{L:sturmL}. By the Sturm's oscillation theorem, $\varphi_{n}$ is the $(n-1)$-th eigenfunction of $\gh^{\rm D}(k-z_{n,1}(k))$. Therefore we have $\lambda_{\gb,n}(k)=\mu^{\rm D}_{n-1}(k-z_{n,1}(k))$.
Moreover for all $\ell\geq 1$ and $k\in\R$, $\mu^{\rm D}_{\ell}(k)>E_{\ell}$.
This follows by domain-monotonicity, once we notice that $\gh^{\rm D}(k)$, after
a translation, is given by the harmonic oscillator $D_x^2+x^2$ on
$(-k,+\infty)$ with Dirichlet boundary condition at $x=-k$, together with
the fact that the harmonic oscillator $D_x^2+x^2$ on $\R$ has simple
eigenvalues $E_{\ell}$. This provides the desired conclusion.
\end{proof}

We recall that the sequence $(\xi_{n})_{n\geq1}$ is defined in Notation \ref{ntn:theta1}. For the next proposition, we will need to use the so-called parabolic cylinder functions. We denote by $U(a,x)$ the first Weber parabolic cylinder
function (whose properties are summarized in the Appendix, section \ref{appendix}; for more details see, for example, \cite[section 12]{DLMF} or \cite{AbSt64}) which is a solution of the linear ordinary differential equation:
\begin{equation}
\label{E:weber}
-y''(x)+\frac{1}{4}x^2 y(x)=-a y(x).
\end{equation}
The Weber function $U(a,x)$ decays exponentially when $x\to+\infty$.

\begin{proposition}
Assume that $\gb=(0,1)$. For all $n\geq 1$, the equation $\lambda_{\gb,n}(k)=k^2$ has a unique non negative solution,  $k=\xi_{n-1}$, such that, locally, for $k>k_{n}$, $\lambda_{\gb,n}(k)<k^2$. Moreover we have $\lambda_{\gb,n}(\xi_{n-1})=\Theta_{n-1}$.
\end{proposition}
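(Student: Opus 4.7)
The plan is to relate the detachment equation $\lambda_{\gb,n}(k)=k^{2}$ to the Neumann eigenvalue problem on the half-line $\R_{+}$. By Lemma~\ref{lemma-trans}, for $\lambda<k^{2}$ an eigenvalue of $\gh_{\gb}(k)$ is characterized by an $\sL^{2}(\R_{+})$-solution of $-\psi''+(x-k)^{2}\psi=\lambda\psi$ satisfying the Robin condition $\psi'(0)=\sqrt{k^{2}-\lambda}\,\psi(0)$. Formally letting $\lambda\uparrow k^{2}$ reduces this to the Neumann condition $\psi'(0)=0$, so any detachment point $k_{0}>0$ with $\lambda_{\gb,n}(k_{0})=k_{0}^{2}$ should force $k_{0}^{2}$ to coincide with some Neumann eigenvalue $\mu^{\rm N}_{\ell}(k_{0})$, $\ell\geq 1$; by Notation~\ref{ntn:theta1} this uniquely identifies $k_{0}=\xi_{\ell-1}$ with value $\Theta_{\ell-1}$. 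The whole proof then reduces to (a) making the degeneration rigorous and (b) identifying $\ell=n$.

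I would carry out step (a) through the Weber parabolic cylinder function $U$ (see Section~\ref{appendix}) rather than by a compactness argument on normalized eigenfunctions. The decaying $\sL^{2}(\R_{+})$ solution of the transverse ODE is proportional to $u_{\lambda}(x)=U(-\lambda/2,\sqrt{2}(x-k))$, and the Robin condition becomes
\begin{equation*}
\sqrt{2}\,U'(-\lambda/2,-\sqrt{2}\,k)=\sqrt{k^{2}-\lambda}\,U(-\lambda/2,-\sqrt{2}\,k).
\end{equation*}
Taking $k\to k_{0}^{+}$ along the open set $S_{n}:=\{k>0:\lambda_{\gb,n}(k)<k^{2}\}$, the right-hand side vanishes, and continuity of $U$ and $U'$ in both arguments yields $U'(-k_{0}^{2}/2,-\sqrt{2}\,k_{0})=0$, i.e.\ $k_{0}^{2}=\mu^{\rm N}_{\ell}(k_{0})$ for some $\ell\geq 1$, hence $k_{0}=\xi_{\ell-1}$ by Notation~\ref{ntn:theta1}.

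Step (b) is a zero-counting argument. By Lemma~\ref{L:sturmL}, $\psi_{\gb,n}(k)$ has exactly $n-1$ zeros, all on $\R_{+}$, and on $\R_{+}$ it coincides up to scalar with $U(-\lambda_{\gb,n}(k)/2,\sqrt{2}(\cdot-k))$. Its zeros persist in the limit $k\to k_{0}^{+}$: escape to $+\infty$ is ruled out by the exponential decay of $U$, and a new zero at $x=0$ would combine with the identity $U'(-k_{0}^{2}/2,-\sqrt{2}\,k_{0})=0$ to force the limit function to vanish identically by ODE uniqueness. Thus the limit $U(-k_{0}^{2}/2,\sqrt{2}(\cdot-k_{0}))$ has exactly $n-1$ simple zeros in $\R_{+}$, and since it satisfies the Neumann BC at $0$ it is, by Sturm--Liouville theory, the $\ell$-th Neumann eigenfunction of $\gh^{\rm N}(k_{0})$, which has $\ell-1$ zeros on $\R_{+}$. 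Therefore $\ell=n$, $k_{0}=\xi_{n-1}$, and $\lambda_{\gb,n}(\xi_{n-1})=\Theta_{n-1}$.

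Existence of $k_{0}$ is immediate from Lemma~\ref{limit}: since $\lambda_{\gb,n}(k)\to E_{n}$ while $k^{2}\to+\infty$, $S_{n}$ contains a neighborhood of $+\infty$, and $k_{0}:=\inf S_{n}$ is a well-defined positive number satisfying $\lambda_{\gb,n}(k_{0})=k_{0}^{2}$ by continuity. Uniqueness of the non-negative solution then follows from the uniqueness clause in Notation~\ref{ntn:theta1}, and the local statement $\lambda_{\gb,n}(k)<k^{2}$ just to the right of $\xi_{n-1}$ is automatic from the infimum characterization of $k_{0}$. The main obstacle, in my view, is the zero-counting in step (b): it rests on the simple, transversal nature of the zeros of the Weber family and requires some care when a zero approaches the boundary $x=0$, which is precisely where the algebraic consequence of $U'(-k_{0}^{2}/2,-\sqrt{2}\,k_{0})=0$ from step (a) is needed.
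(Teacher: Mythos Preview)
Your argument is correct and coincides with the paper in step~(a): both express the Robin eigenfunction via the Weber function, pass to the limit $k\to k_{0}^{+}$ along the set where $\lambda_{\gb,n}(k)<k^{2}$, and read off the Neumann condition $U'(-k_{0}^{2}/2,-\sqrt{2}\,k_{0})=0$, which forces $k_{0}=\xi_{\ell-1}$ for some $\ell$. The difference is step~(b). The paper identifies $\ell=n$ by invoking the bracketing of Proposition~\ref{strip}: from $\lambda_{\gb,n}(\tilde k_{n})=\tilde k_{n}^{2}\in[E_{n-1},E_{n}]$ and $\Theta_{\ell-1}\in(E_{\ell-1},E_{\ell})$ one gets $\ell=n$ in one line. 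You instead count the zeros of the limiting Neumann eigenfunction and invoke Sturm oscillation directly. Both routes ultimately rest on Lemma~\ref{L:sturmL}, but the paper's packages the oscillation argument once in Proposition~\ref{strip} (via the Dirichlet comparison) and then reuses it here without any limiting zero-tracking; your route is more self-contained at this proposition but requires the boundary analysis at $x=0$ and $x=+\infty$ that you flag. One presentational point: your uniqueness paragraph only treats $k_{0}=\inf S_{n}$, whereas the statement asks for uniqueness among \emph{all} $k$ with the local property $\lambda_{\gb,n}(k')<k'^{2}$ for $k'>k$ nearby (note that $\lambda_{\gb,n}(k)=k^{2}$ for every $k\in[0,\xi_{n-1}]$ in the Rayleigh-quotient sense, so the unrestricted equation is \emph{not} uniquely solvable). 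The paper handles this by taking an arbitrary such solution $\tilde k_{n}$ and approaching it through $\tilde k_{n}+1/p$; your argument transplants verbatim to any left boundary point of $S_{n}$, and then $S_{n}=(\xi_{n-1},+\infty)$ follows, so this is only a matter of wording.
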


\begin{proof}
Thanks to Lemma \ref{limit}, we can define $k_{n}=\max\{k\geq0 : \lambda_{\gb,n}(k)=k^2\}$. By continuity, we have, for all $k>k_{n}$, $\lambda_{\gb,n}(k)<k^2$. Let us now prove the uniqueness. Let us consider a solution $\tilde k_{n}\geq 0$. For all integer $p\geq p_{0}$ with $p_{0}$ large enough, we have $\lambda_{\gb,n}\left(\tilde k_{n}+\frac 1 p\right)<\left(\tilde k_{n}+\frac 1 p\right)^2$.
Let us now consider the eigenvalue equation
\begin{equation}\label{eve'}
D_{x}^2\varphi_{n,p}+(\tilde k_{n,p}-a_{\gb}(x))^2\varphi_{n,p}=\lambda_{n,p}\varphi_{n,p},
\end{equation}
where $\varphi_{n,p}=\psi_{\gb,n}(\tilde k_{n,p})$, $\tilde k_{n,p}=\tilde k_{n}+\frac{1}{p}$ and $\lambda_{n,p}=\lambda_{\gb,n}(\tilde k_{n,p})$. Let us investigate the limit $p\to+\infty$. As seen in the proof of Lemma \ref{lemma-trans}, we know that there exists $\alpha\in \R^{*}$ such that, for $x\leq 0$,
\[
\varphi_{n,p}(x)=\alpha e^{x\sqrt{\tilde k_{n,p}^2-\lambda_{n,p}}}.
\]
By solving~\eqref{eve'} on $x\geq0$ and using the parabolic cylinder function $U$, we find that there exits $\beta$ such that, for $x\geq 0$,
\[
\varphi_{n,p}(x)=\beta U\left(-\frac{\lambda_{n,p}}{2} ; \sqrt{2}(x-\tilde k_{n,p})  \right).
\]
Since $\varphi_{n,p}\neq 0$, we have $(\alpha,\beta)\neq (0,0)$ and $\varphi_{n,p}$ is $\mathcal{C}^1$ at $x=0$, we get the transmission condition:
\[
\sqrt{\tilde k_{n,p}^2-\lambda_{n,p}}\hat U\left(-\frac{\lambda_{n,p}}{2};-\tilde k_{n,p}\sqrt{2}\right)-\sqrt{2} U'\left(-\frac{\lambda_{n,p}}{2} ;-\tilde k_{n,p}\sqrt{2} \right)=0.
\]
 By continuity and taking the limit $p\to+\infty$, we get
\[
U'\left(-\frac{\tilde k_{n}^2}{2} ;-\tilde k_{n}\sqrt{2} \right)=0.
\]
Notice that the function $x\mapsto U\left(-\frac{\tilde k_{n}^2}{2}; \sqrt{2}(x-\tilde k_{n})\right)$ solves the differential equation
\begin{equation*}
\left\{
\begin{aligned}
& -y''(x)+(x-\tilde k_{n})^2y(x)=\tilde k_{n}^2 y(x)
\\
& y'(0)=0 \ \ \text{and} \ \ y(0) \neq 0
\end{aligned}
\right.
\end{equation*}
Moreover it belongs to $\mathcal{S}(\overline{\R_{+}})$. Therefore, there exists $\ell\geq 1$ such that $\mu^N_{\ell}(\tilde k_{n})=\tilde k_{n}^2$, and therefore $\tilde k_{n}=\xi_{\ell-1}$ and $\tilde k_{n}^2=\Theta_{\ell-1}$. By Proposition \ref{strip}, we know that $\tilde k_{n}^2=\lambda_{\gb,n}(\tilde k_{n})\in[E_{n-1},E_{n}]$. Moreover, we recall that $\Theta_{\ell-1}\in\left(E_{\ell-1},E_{\ell}\right)$. This implies that $\ell=n$.
\end{proof}

\section{Trapping magnetic step}\label{sec:trapping-steps1}

In this section, we focus on the case $\gb=(b_{1},1)$ with $-1<b_{1}<0$. The case of $b_1 = -1$ is treated in
\cite{DomHisSoc13}. In that case, due to the symmetry, all levels are split as $k \to +\infty$. The asymptotic expansion
comes from \cite{HisPofSoc14,Popoff}. The band functions for this case are shown in Figure~\ref{F:1}.

When $b_1 < 0$, the effective potential $V_{\gb}(x ; k)$, described in \eqref{eq:eff-potential1}, diverges to infinity as $x \to \pm \infty$, so that for all $k\in\R$, the operator $\gh_{\gb}(k)$ has compact resolvent. The spectrum of $\gh_{\gb}(k)$ is discrete and the eigenvalues $(\lambda_{\gb,n}(k))_{n\geq 1}$ are all simple.

For $k<0$, the effective potential $V_{\gb}(x ; k)$ has the form of an asymmetric potential well, see Figure \ref{fig:case2}.
It follows that $V_{\gb}(\cdot ; k)\geq k^2$ so that, for $k\leq 0$, $\lambda_{\gb,n}(k)\geq k^2$. In particular, we have, for all $n\geq 1$,
\[
\lim_{k\to-\infty}\lambda_{\gb,n}(k)=+\infty, ~~k < 0.
\]
For $k>0$, the effective potential
has the form of an asymmetric double-well:
\beq\label{eq:eff-potential3posk}
V_{\gb}(x ;k) = \left\{ \begin{array}{cc}
                            (k - b_1x)^2 & x < 0 \\
                            (k - x)^2 & x>0
                            \end{array}
                            \right.
\eeq
This asymmetric double-well admits two nondegenerate minima at $x=\frac{k}{b_{1}}$ and $x=k$, for $k>0$ and $b_1 < 0$.

Let us now investigate the behavior of the band functions in the limit $k\to+\infty$ and prove Theorem \ref{magnetic-step}.
Due to the asymmetry of the double-well potential \eqref{eq:eff-potential3posk}, we will prove, among other results, that not all the Landau levels exhibit an exponentially small splitting as in the case of $(b_1,b_2) = (-1,1)$.

\subsection{Limits of the band functions}\label{subsec:band-fncs1}

Let us first describe the set of the limits of the band functions $\lambda_{\gb,n}(k)$ as $k \rightarrow + \infty$. As noted above, the limit as $k \rightarrow - \infty$ of any band function is plus infinity.


\begin{lemma}\label{set-of-limits}
Assume that $\gb=(b_1,1)$, with $-1<b_1<0$. Recall from Definition \ref{limit} that $\mathfrak{L}$ denotes the union of the Landau levels for magnetic fields $b_1\in(-1,0)$ and $b_2 = 1$, and that $\mathfrak{S}$ denotes the splitting set $\{ E_N, n\geq 1\} \cap \{ |b_1| E_n, n \geq 1 \}$.Then the set  of limits as $k \rightarrow + \infty$ of the band functions is precisely the set of Landau levels $\mathcal{L}$:
\[
\Bigl\{\lim_{k\to+\infty}\lambda_{\gb,n}(k), n\geq 1\Bigr\}=\mathfrak{L}.
\]
The band functions exhibit the following behavior as $k \rightarrow + \infty$:
\begin{enumerate}
\item Non-splitting case. If a band function limit $\lambda\in\mathfrak{L}\setminus\mathfrak{S}$,
then for any $\epsilon_0 > 0$,
there exists a $K>0$ such that if $k\geq K$ then it holds that $\rank\mathds{1}_{[\lambda-\eps_{0},\lambda+\eps_{0}]}(\gh_{\gb}(k))=1$ and there exists a unique $n\geq 1$ such that $\lambda_{\gb,n}$ converges towards $\lambda$.

\item Splitting case. If a band function limit $\lambda\in\mathfrak{S}$,
then for any $\epsilon_0 > 0$,
there exists a $K>0$ such that if $k\geq K$ then it holds that $\rank\mathds{1}_{[\lambda-\eps_{0},\lambda+\eps_{0}]}(\gh_{\gb}(k))=2$ and there exists a unique $n\geq 1$ such that $\lambda_{\gb,n}$ and $\lambda_{\gb,n+1}$ converge towards $\lambda$.
\end{enumerate}
\end{lemma}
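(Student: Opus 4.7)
The plan is to recast the problem as a one-dimensional semiclassical double well. For $k>0$, the change of variable $u=x/k$ and the corresponding $\sL^{2}$-unitary dilation conjugate $\gh_{\gb}(k)$ into
\[
k^{2}\bigl(h^{2}D_{u}^{2}+W(u)\bigr),\qquad h=k^{-2},
\]
where
\[
W(u)=(1-b_{1}u)^{2}\one_{\R_{-}}(u)+(1-u)^{2}\one_{\R_{+}}(u),
\]
by the homogeneity $a_{\gb}(ku)=k\,a_{\gb}(u)$. The function $W$ is continuous, piecewise quadratic, and forms a double well with nondegenerate minima at $u_{L}=1/b_{1}<0$ and $u_{R}=1>0$ (both of value $0$, with curvatures $W''(u_{L})=2b_{1}^{2}$ and $W''(u_{R})=2$), separated by a barrier of height $W(0)=1$. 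The harmonic approximations at each well predict semiclassical low-lying eigenvalues $h|b_{1}|(2n-1)$ and $h(2n-1)$; multiplying back by $k^{2}=h^{-1}$ returns precisely the set $\mathfrak{L}$.

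\textbf{Upper bound.} For each $\lambda\in\mathfrak{L}$ I would build a Hermite-type quasi-mode concentrated at the appropriate well: for $\lambda=E_{n}$ a suitably truncated rescaled Hermite function at $u_{R}=1$, and for $\lambda=|b_{1}|E_{n}$ its analog at $u_{L}=1/b_{1}$. Undoing the dilation gives test functions supported in $k^{1/2}$-sized neighborhoods of $x=k$ and $x=k/b_{1}$ respectively, and a direct computation using the explicit form of $V_{\gb}$ shows their Rayleigh quotients are $\lambda+o(1)$ as $k\to+\infty$. In the splitting case the two quasi-modes have disjoint supports and are therefore orthogonal; the min-max principle applied to their span produces two band functions bounded above by $\lambda+o(1)$. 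In the non-splitting case only one quasi-mode is needed.

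\textbf{Lower bound.} I would employ an IMS-type localization with a $k$-dependent partition $\chi_{L}^{2}+\chi_{C}^{2}+\chi_{R}^{2}=1$, where $\chi_{L}$ is supported in $\{x<-\delta k\}$, $\chi_{R}$ in $\{x>\delta k\}$, $\chi_{C}$ near the origin, and $|\chi_{j}'|=O(k^{-1})$. The IMS identity
\[
\gh_{\gb}(k)=\sum_{j\in\{L,C,R\}}\chi_{j}\gh_{\gb}(k)\chi_{j}-\sum_{j}|\chi_{j}'|^{2}
\]
has localization error of order $k^{-2}$. On the support of $\chi_{C}$ one checks that $V_{\gb}(x,k)\geq(1-\delta)^{2}k^{2}$ (using $|b_{1}|<1$), so this piece is far above any fixed level for $k$ large. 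The left piece is bounded below by the Dirichlet realization of $D_{x}^{2}+(k-b_{1}x)^{2}$ on $\R_{-}$; after the translation $y=x-k/b_{1}$ it becomes the Dirichlet harmonic oscillator $D_{y}^{2}+b_{1}^{2}y^{2}$ on $(-\infty,-k/b_{1})$, whose boundary $-k/b_{1}$ recedes to $+\infty$ as $k\to+\infty$, and whose $n$-th eigenvalue consequently converges to the full-line Landau level $|b_{1}|(2n-1)$. The right piece is handled symmetrically and produces the limits $(2n-1)$.

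Combining the two sides via the min-max principle yields the full statement, with the rank of $\one_{[\lambda-\eps_{0},\lambda+\eps_{0}]}(\gh_{\gb}(k))$ for large $k$ equal to the multiplicity of $\lambda$ in $\mathfrak{L}$: one in the non-splitting case and two in the splitting case. The main obstacle I anticipate is the lower bound in the splitting case, where one must rule out quantitatively a third eigenvalue of $\gh_{\gb}(k)$ clustering near $\lambda$; this reduces to showing that each of the two half-line harmonic oscillators above contributes \emph{exactly one} eigenvalue in $(\lambda-\eps_{0},\lambda+\eps_{0})$ once $\eps_{0}$ is small and $k$ large, which follows from the simplicity and strict spacing of the Landau levels.
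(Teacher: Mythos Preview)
Your proposal is correct and follows essentially the same route as the paper: both recast $\gh_{\gb}(k)$ as a semiclassical double-well operator $k^{2}(\hbar^{2}D_{y}^{2}+V_{\gb}(y;1))$ with $\hbar=k^{-2}$, and then invoke the harmonic approximation at the two wells $y=b_{1}^{-1}$ and $y=1$ to identify the limits and their multiplicities. The only difference is in the packaging of the comparison: the paper decouples into a direct sum of two Dirichlet realizations around the wells and appeals to Agmon decay (citing Helffer's lecture notes) to show the spectra agree modulo $\mathcal{O}(e^{-c/\hbar})$, whereas you carry out the upper bound with explicit Hermite quasi-modes and the lower bound via an IMS three-piece partition. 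These are interchangeable standard tools; your version is slightly more self-contained, while the paper's Agmon route gives the exponential smallness of the error for free, which is convenient for the subsequent splitting analysis.
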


\begin{proof}
Let us first provide a semiclassical reformulation and summarize the standard procedure of the harmonic approximation for multiple wells (see \cite[Chapter 4]{He88} for example). For $k>0$, we let $x=ky$ and the operator $\gh_{\gb}(k)$ is unitarily equivalent to $k^2\tilde\gh_{\gb,k^{-2}}$, where $\tilde\gh_{\gb,\hbar}$ is the self-adjoint realization on $\sL^2(\R)$ of the differential operator $\hbar^2 D_{y}^2+V_{\gb}(y ; 1)$. The minima of $V_{\gb}(\cdot ; 1)$ are reached for $x=b_{1}^{-1}<0$ and $x=1$. We have $V_{\gb}''(1; 1)=2$ and $V_{\gb}''(b^{-1}_{1} ; 1)=2|b_{1}|$. Let us just recall the spirit of the analysis. Let us consider $\eta>0$ such that $b_{1}^{-1}+\eta<0$ and $1-\eta>0$. Then, let us consider the operator $\tilde\gh_{\gb,\hbar,\eta}=\tilde\gh^\lef_{\gb,\hbar,\eta}\oplus \tilde\gh^\rig_{\gb,\hbar,\eta}$ where $\tilde\gh^\lef_{\gb,\hbar}$ and $\tilde\gh^\rig_{\gb,\hbar}$ are the Dirichlet realizations of $\hbar^2D_{y}^2+V(y; 1)$ on $\sL^2((-\infty,b_{1}+\eta))$ and on $\sL^2((1-\eta,+\infty))$ respectively. Let us now consider an arbitrary constant $C>0$. It is well-known (see Helffer's lecture notes \cite{He88}) that any eigenfunction of $\tilde\gh_{\gb,\hbar,\eta}$ or of  $\tilde\gh_{\gb,\hbar}$  associated with an eigenvalue $\lambda\leq C\hbar$ satisfies decay estimates of Agmon type away from the minima. In particular, this allows to prove the existence of $c>0$ such that $\gS(\tilde\gh_{\gb,\hbar})\cap\{\lambda\in\R : \lambda\leq C\hbar\}$ and $\gS(\tilde\gh_{\gb,\hbar,\eta})\cap\{\lambda\in\R : \lambda\leq C\hbar\}$ coincides modulo $\mathcal{O}\bigl(e^{-c\hbar^{-1}}\bigr)$. Then, by the harmonic approximation, the spectrum of $\tilde\gh_{\gb,\hbar,\eta}$ is well-known in the semiclassical limit: the $n$-th eigenvalue of $\tilde\gh^\lef_{\gb,\hbar,\eta}$ is equivalent to $|b_{1}|E_{n}\hbar$ and the one of $\tilde\gh^\rig_{\gb,\hbar,\eta}$ to $E_{n}\hbar$. The desired conclusion follows. The last part of the statement follows from the fact that the spectra (with multiplicity) of $\tilde\gh_{\gb,\hbar}$ and $\tilde\gh_{\gb,\hbar,\eta}$ coincide modulo an exponentially small error.
\end{proof}

As described in \cite{HisPofSoc14}, the convergence of the band functions toward these limits corresponds to the existence of bulk states with energies close to the Landau levels. We next provide a more accurate description of the convergence of the band functions towards their limits. This is important for two reasons. First, this allows a more quantitative characterization of the associated bulk states following the analysis of \cite{HisPofSoc14}. Second, the manner in which the band function approaches its limit as $k \rightarrow + \infty$, that is, whether it approaches from above or below the Landau level, provides a hint as to whether the band function is
globally monotonic or not. In order to describe the asymptotic behavior of the band function, we will leave the semiclassical techniques and use the characterization of the eigenvalues $\lambda_{\gb,n}(k)$ as zeros of certain special functions.

\subsection{Splitting and non-splitting of levels}\label{subsec:split-or-not1}

We consider $k>0$ and calculate the behavior of the energy levels in the splitting and the non-splitting cases.
This is a refinement of Lemma \ref{set-of-limits}, where we have shown that each band function $\lambda_{\gb,n}$ tends to a threshold
that is an element of the set of Landau levels $\mathfrak{L}$. There is no explicit formula for the index of the corresponding Landau level as a function of $n$, the band index.

In this section, for the sake of simplicity, we will consider a converse strategy: Given a threshold $E_{n}$ or $bE_{n}$, where $n \in \N^{*}$ is fixed, we describe the asymptotic behavior of the band functions converging toward this threshold. As described in Lemma \ref{set-of-limits}, there are either one or two band functions $\lambda_{\gb,p_n} (k)$ converging toward a threshold
$E_{n}$ or $bE_{n}$ in $\mathfrak{L}$.
Although the indices $p_n$ of these band functions are not explicit in terms of the index $n$, the asymptotics of the related band functions $\lambda_{\gb,p_n} (k)$, as $k \rightarrow + \infty$,
are explicit functions of $n$.

Let us notice that this procedure will provide the asymptotics of all the band functions since there is a bijection between their limits and the set of the thresholds $\mathfrak{L}$ (counted with multiplicity).

\subsubsection{Algebraic characterization of the eigenvalues of $\gh_{\gb}(k)$}
 We begin with an algebraic characterization of the eigenvalues of $\gh_{\gb}(k)$. Recall that $U(a,x)$ is the first Weber parabolic cylindrical function introduced in Section \ref{sec:mag-wall1} as a solution of \eqref{E:weber}.



\begin{proposition}\label{prop:determinant1}
Assume that $\gb=(b_1,1)$, with $-1<b_1<0$.
A number $\lambda$ is an eigenvalue of $\gh_{\gb}(k)$ for $k \in \R$ if and only if it is a solution of $D_{\gb}(\lambda,k)=0$, where
\[
\begin{aligned}
D_{\gb}(\lambda,k)&:=U(-\tfrac{\lambda}{2b},-\sqrt{2}\tfrac{k}{\sqrt{b}})
U'(-\tfrac{\lambda}{2},-\sqrt{2}k)\\
&\quad+\sqrt{b}U'(-\tfrac{\lambda}{2b},-\sqrt{2}\tfrac{k}{\sqrt{b}})U(-\tfrac{\lambda}{2},-\sqrt{2}k)
\end{aligned}
\]
and $b=|b_{1}|$.
\end{proposition}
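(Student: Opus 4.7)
The plan is to reduce the eigenvalue equation on each half line $\{x<0\}$ and $\{x>0\}$ to a Weber equation via linear changes of variable, select the unique (up to scalar) $\sL^2$-decaying solution on each side, and then impose the $\mathcal{C}^1$ matching at $x=0$; the compatibility of this $2\times 2$ linear system will give exactly the relation $D_{\gb}(\lambda,k)=0$.

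More precisely, fix $\lambda\in\R$ and look for nonzero $\psi\in\Dom(\gh_{\gb}(k))$ solving $-\psi''+V_{\gb}(\cdot,k)\psi=\lambda\psi$. On $\R_{+}$ the equation reads $-\psi''+(x-k)^2\psi=\lambda\psi$; setting $\xi=\sqrt{2}(x-k)$ transforms it into the canonical Weber equation \eqref{E:weber} with parameter $a=-\lambda/2$. Since $U(a,\cdot)$ is (up to scalar) the unique $\sL^2$-solution at $+\infty$ (cf.\ the appendix), any $\sL^2$-solution on $\R_+$ has the form
\[
\psi(x)=c_{+}\,U\!\left(-\tfrac{\lambda}{2},\,\sqrt{2}(x-k)\right),\qquad x\ge 0.
\]
On $\R_{-}$ the potential is $(k+bx)^2=b^2(x+k/b)^2$; the substitution $\xi=-\sqrt{2b}(x+k/b)$ (chosen so that $x\to-\infty$ corresponds to $\xi\to+\infty$) again produces the Weber equation, now with parameter $a=-\lambda/(2b)$. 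Hence the $\sL^2$-solutions on $\R_{-}$ are
\[
\psi(x)=c_{-}\,U\!\left(-\tfrac{\lambda}{2b},\,-\sqrt{2}\,\tfrac{\sqrt{b}\,x+k/\sqrt{b}}{1}\right),\qquad x\le 0,
\]
with the sign chosen so the argument tends to $+\infty$ as $x\to-\infty$.

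The next step is to write the $\mathcal{C}^{1}$ transmission conditions at $x=0$, namely $\psi(0^-)=\psi(0^+)$ and $\psi'(0^-)=\psi'(0^+)$. Evaluating both branches at $x=0$ and computing the derivatives using the chain rule (which brings out a factor $\sqrt{2}$ on the right side and $-\sqrt{2b}$ on the left side) yields the homogeneous linear system
\[
\begin{pmatrix}
U\!\left(-\tfrac{\lambda}{2},-\sqrt{2}k\right) & -U\!\left(-\tfrac{\lambda}{2b},-\sqrt{2}\tfrac{k}{\sqrt{b}}\right)\\[2pt]
\sqrt{2}\,U'\!\left(-\tfrac{\lambda}{2},-\sqrt{2}k\right) & \sqrt{2b}\,U'\!\left(-\tfrac{\lambda}{2b},-\sqrt{2}\tfrac{k}{\sqrt{b}}\right)
\end{pmatrix}\begin{pmatrix}c_{+}\\ c_{-}\end{pmatrix}=0.
\]
A nonzero eigenfunction exists if and only if this determinant vanishes; dividing by $\sqrt{2}$ and collecting terms gives exactly the stated expression
\[
D_{\gb}(\lambda,k)=U\!\left(-\tfrac{\lambda}{2b},-\sqrt{2}\tfrac{k}{\sqrt{b}}\right)U'\!\left(-\tfrac{\lambda}{2},-\sqrt{2}k\right)+\sqrt{b}\,U'\!\left(-\tfrac{\lambda}{2b},-\sqrt{2}\tfrac{k}{\sqrt{b}}\right)U\!\left(-\tfrac{\lambda}{2},-\sqrt{2}k\right)=0.
\]

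Conversely, if $D_{\gb}(\lambda,k)=0$ then the linear system admits a nontrivial $(c_{-},c_{+})$, and the piecewise-defined $\psi$ obtained from these coefficients is in $\sL^2(\R)$, of class $\mathcal{C}^1$ at $x=0$ by construction, and solves the ODE on each half-line, so it lies in $\Dom(\gh_{\gb}(k))$ and is a genuine eigenfunction. The only delicate points in this argument are the bookkeeping of signs in the two changes of variable (to make sure that on each side the unique decaying solution is identified correctly, rather than the exponentially growing Weber solution), and the explicit computation of the Jacobians $\sqrt{2}$ and $-\sqrt{2b}$ that appear in the derivative row of the matching matrix; once these are handled, the rest is a direct computation.
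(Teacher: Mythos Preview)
Your proof is correct and follows essentially the same approach as the paper: reduce the eigenvalue equation on each half-line to a Weber equation by a linear change of variable, select the unique $\sL^2$-decaying branch $U$ on each side, and impose the $\mathcal{C}^1$ matching at $x=0$ as a $2\times 2$ linear system whose determinant is (up to the harmless factor $\sqrt{2}$) exactly $D_{\gb}(\lambda,k)$. Your bookkeeping of the chain-rule factors $\sqrt{2}$ and $-\sqrt{2b}$ is accurate, and the converse direction you add is the obvious complement.
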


\begin{proof}
From the eigenvalue equation and the form of the potential \eqref{eq:eff-potential3posk}, we are led to solve for $x>0$:
\[
-y''(x)+(x-k)^2y(x)=\lambda y(x)
\]
with the condition that the solution is in $\sL^2(\R_{+})$. The only such solution is
\[
x\mapsto U(-\tfrac{\lambda}{2},\sqrt{2}(x-k)).
\]
In particular, when $\lambda=2n-1$ is a Landau level, the solution is the $n$-th Hermite function centered at $x=k$.
On the half line $x < 0$, we solve
\[
-y''(x)+(b_{1}x-k)^2y(x)=\lambda y(x)
\]
with the condition that the solution is in $\sL^2(\R_{-})$. This solution may be expressed as
\[
x\mapsto U(-\tfrac{\lambda}{2b},\sqrt{2}(-\sqrt{b}x-\tfrac{k}{\sqrt{b}})).
\]
Therefore, if $\psi_{\gb,n}(\cdot,k)$ is an eigenfunction associated with $\lambda=\lambda_{\gb, n}(k)$, it may be written as (up to an
overall multiplicative constant):
\[
\psi_{\gb, n}(x,k)=
\left\{
\begin{aligned}
&U(-\tfrac{\lambda}{2b},\sqrt{2}(-\sqrt{b}x-\tfrac{k}{\sqrt{b}})) \quad \mbox{if} \ \ x<0
\\
&\alpha U(-\tfrac{\lambda}{2},\sqrt{2}(x-k))  \quad \mbox{if} \ \ x>0
\end{aligned}
\right.
\]
where $\alpha\in \R$ is a constant. Since the eigenfunction is $\mathcal{C}^1$ at 0, we have
\[
\left\{
\begin{aligned}
&U(-\tfrac{\lambda}{2b},-\sqrt{2}\tfrac{k}{\sqrt{b}})=\alpha U(-\tfrac{\lambda}{2},-\sqrt{2}k)
\\
&-\sqrt{2b} U'(-\tfrac{\lambda}{2b},-\sqrt{2}\tfrac{k}{\sqrt{b}})= \alpha\sqrt{2}U'(-\tfrac{\lambda}{2},-\sqrt{2}k)
\end{aligned}
\right.
\]
It is possible to solve the above conditions if and only if the $2\times2$ matrix
\[
\begin{pmatrix}
U(-\tfrac{\lambda}{2b},-\sqrt{2}\tfrac{k}{\sqrt{b}}) & U(-\tfrac{\lambda}{2},-\sqrt{2}k)
\\
-\sqrt{b}U'(-\tfrac{\lambda}{2b},-\sqrt{2}\tfrac{k}{\sqrt{b}}) & U'(-\tfrac{\lambda}{2},-\sqrt{2}k)
\end{pmatrix}
\]
is non-invertible.
\end{proof}

We will use the following result on the asymptotic expansion
of the determinant $D_{\gb}(\lambda,k)$ as $k \to + \infty$.

\begin{lemma}\label{Tj} Assume that $\gb=(b_1,1)$, with $-1<b_1<0$, and let
$b= |b_1|$. The determinant $D_{\gb}(\lambda,k)$ of Proposition \ref{prop:determinant1}
can be written as
\begin{equation}
\label{E:Devwronskien}
\begin{aligned}
D_{\gb}(\lambda,k)
&=e^{(1+\frac{1}{b})\frac{k^2}{2}}T_{1}(\lambda,k)
+e^{(-1+\frac{1}{b})\frac{k^2}{2}}T_{2}(\lambda,k)\\
&\quad +e^{(1-\frac{1}{b})\frac{k^2}{2}}T_{3}(\lambda,k)
+e^{(-1-\frac{1}{b})\frac{k^2}{2}}T_{4}(\lambda,k),
\end{aligned}
\end{equation}
where the factors $T_{j}(\lambda,k)$ admit the following asymptotic expansions as $k\to+\infty$:
\[
\left\{
\begin{aligned}
T_{1}(\lambda,k)&=\frac{1}{\Gamma(\frac{b-\lambda}{2b})\Gamma(\frac{1-\lambda}{2})}k^{-\frac{b+1}{2b}\lambda}2^{\frac{3}{2}-\frac{(1+b)\lambda}{4b}}b^{\frac{b+\lambda}{4b}}\pi\left(1+\mathcal{O}(k^{-2})\right),
\\
T_{2}(\lambda,k)&=\frac{1}{\Gamma(\frac{b-\lambda}{2b})} k^{\frac{b-1}{2b}\lambda-2}
\left(\frac{\sqrt{\pi}}{2} \right) 2^{\lambda\frac{b-1}{4b}}b^{\frac{b+\lambda}{4b}}(1+b)(-\sin\frac{\lambda\pi}{2})\left(1+\mathcal{O}(k^{-2})\right),\\
T_{3}(\lambda,k)&=\frac{1}{\Gamma(\frac{1-\lambda}{2})}k^{\frac{-b+1}{2b}\lambda-2} \left(\frac{\sqrt{\pi}}{2}\right)2^{\lambda\frac{-b+1}{4b}}b^{\frac{b-\lambda}{4b}}(1+b)(-\sin
\frac{\lambda\pi}{2b})\left(1+\mathcal{O}(k^{-2})\right),\\
T_{4}(\lambda,k)&=\sqrt{2}k^{\frac{b+1}{2b}\lambda}2^{\lambda\frac{b+1}{4b}}b^{\frac{b-\lambda}{4b}}\cos\frac{(1+b)\lambda\pi}{2b}\left(1+\mathcal{O}(k^{-2})\right).
\end{aligned}
\right.
\]
\end{lemma}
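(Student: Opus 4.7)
The plan is to extract the asymptotics of $D_{\gb}(\lambda,k)$ by first converting every Weber function at a large negative argument into a combination of Weber functions at a large positive argument, and then substituting the standard asymptotic expansions of $U$, $V$, $U'$, $V'$ at $+\infty$.

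The first step is to apply the connection formula
\[
U(a,-z)=\frac{\pi}{\Gamma(a+\tfrac12)}V(a,z)-\sin(\pi a)\,U(a,z),
\]
together with its derivative version (obtained by differentiating in $z$ and using $\partial_{z}U(a,-z)=-U'(a,-z)$), to each of the four factors $U(-\tfrac{\lambda}{2b},-\sqrt{2}\tfrac{k}{\sqrt{b}})$, $U'(-\tfrac{\lambda}{2b},-\sqrt{2}\tfrac{k}{\sqrt{b}})$, $U(-\tfrac{\lambda}{2},-\sqrt{2}k)$, $U'(-\tfrac{\lambda}{2},-\sqrt{2}k)$. Each becomes the sum of a $V$-piece (carrying $e^{k^{2}/(2b)}$ or $e^{k^{2}/2}$) and a $U$-piece (carrying $e^{-k^{2}/(2b)}$ or $e^{-k^{2}/2}$).

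In a second step I substitute the classical large-$z$ asymptotics
\[
U(a,z)\sim e^{-z^{2}/4}z^{-a-1/2}\bigl(1+\mathcal{O}(z^{-2})\bigr),\qquad V(a,z)\sim\sqrt{\tfrac{2}{\pi}}\,e^{z^{2}/4}z^{a-1/2}\bigl(1+\mathcal{O}(z^{-2})\bigr),
\]
together with the corresponding expansions of $U'$ and $V'$, which acquire an extra leading factor $\mp z/2$. Plugging these into $D_{\gb}=U_{1}U_{2}'+\sqrt{b}\,U_{1}'U_{2}$ and multiplying out, the eight resulting products organize themselves into exactly the four exponential scales $e^{(\eps_{1}+\eps_{2}/b)k^{2}/2}$, $\eps_{1},\eps_{2}\in\{\pm1\}$, which is the decomposition \eqref{E:Devwronskien}. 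The pure terms $T_{1}$ (both $V$-parts) and $T_{4}$ (both $U$-parts) each receive a single contribution and are obtained by direct multiplication.

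The delicate part is the treatment of the mixed terms $T_{2}$ and $T_{3}$. Each of them receives two contributions of identical leading magnitude: for $T_{2}$, both $(V\text{-part of }U_{1})\cdot(U\text{-part of }U_{2}')$ and $\sqrt{b}\,(V\text{-part of }U_{1}')\cdot(U\text{-part of }U_{2})$ scale like $k^{(b-1)\lambda/(2b)}$. A direct inspection shows that these leading pieces cancel exactly, so one must push the asymptotic expansion one order further in each factor; this is precisely the source of the shift of the power of $k$ by $k^{-2}$ and of the factor $(1+b)$ appearing in $T_{2}$. The analogous phenomenon, with the roles of the two magnetic fields interchanged, governs $T_{3}$. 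This cancellation and the resulting need to track subleading orders is the main obstacle of the proof. Once the expansions are collected, the final algebraic simplification relies on the reflection identity $\Gamma(a+\tfrac12)\Gamma(\tfrac12-a)=\pi/\cos(\pi a)$ (together with elementary sine addition formulas) to recombine the various $\pi/\Gamma(\cdot)$ and $\sin(\pi\cdot)$ factors produced by Step~1 into the single inverse-Gamma prefactors, and into the trigonometric factors $\sin(\lambda\pi/2)$, $\sin(\lambda\pi/(2b))$, $\cos((1+b)\lambda\pi/(2b))$ of the statement; the powers of $2$ and $b$ then follow by routine arithmetic.
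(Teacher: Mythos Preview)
Your proposal is correct and follows essentially the same route as the paper: the paper's own proof simply points to the appendix, where the connection formula $\pi V(a,x)=\Gamma(a+\tfrac12)\{\sin(\pi a)U(a,x)+U(a,-x)\}$ and the large-argument expansions of $U$, $V$, $U'$ are recorded, and remarks that the remaining algebra was checked with \emph{Mathematica}. Your write-up in fact goes further than the paper by isolating the key structural point, namely the leading-order cancellation in $T_{2}$ and $T_{3}$ (from $-z_{2}/2$ versus $\sqrt{b}\,z_{1}/2$ with $z_{1}=\sqrt{2}k/\sqrt{b}$, $z_{2}=\sqrt{2}k$) that forces the extra factor $k^{-2}$ and the $(1+b)$; this is left implicit in the paper's ``tedious calculations''.
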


\begin{proof}
These expansions follow by the asymptotic expansion of the Weber
function $U$. We refer the appendix, section \ref{appendix}, and to~\cite[Chapter~19]{AbSt64}.
\end{proof}

\begin{remark}
We have controlled these rather tedious calculations with the computer software
\emph{Mathematica}.
\end{remark}

\begin{remark}
To obtain the large $k$ asymptotics of $D_{\gb}(\lambda,k)$, we factor out the leading exponential factor $e^{(1+\frac{1}{b})\frac{k^2}{2}}$ from the right side of \eqref{E:Devwronskien}.
The zeros of $D_{\gb}(\lambda,k)$ are the same as the zeros of
$$
\tilde{D}_{\gb}(\lambda,k) := e^{-(1+\frac{1}{b})\frac{k^2}{2}}D_{\gb}(\lambda,k),
$$
where
\begin{equation}
\label{E:Devwronskien-2}
\tilde{D}_{\gb}(\lambda,k) = T_{1}(\lambda,k) + e^{-{k^2}}T_{2}(\lambda,k)
+e^{-\frac{k^2}{b}}T_{3}(\lambda,k) + e^{-(1+\frac{1}{b}){k^2}}T_{4}(\lambda,k) .
\end{equation}
We next note that the functions $T_1$ and $T_2$ are bounded as $k \rightarrow + \infty$.
The coefficient $T_3$ grows like $k^{\left( \frac{1-b}{2b} \right) \lambda - 2}$ for $\lambda$ large enough.
Similarly, the term $T_4$ is increasing in $k$. However, the prefactors of these terms exhibit Gaussian decay.

The strategy of the proof of Theorem \ref{magnetic-step} consists of writing the band function $\lambda_{\gb,p_n} (k)$
as
$$
\lambda_{\gb,p_n} (k) = \tilde{E}_n + \eps_n (k),
$$
where $\lim_{k \rightarrow + \infty} \lambda_{\gb,p_n} (k) = \tilde{E}_n$, with either $\tilde{E}_n = E_n$ or $\tilde{E}_n = b E_n$,
and $\lim_{k \rightarrow + \infty} \eps_n(k) = 0$. For all $k \in \R$, we have
$$
\tilde{D}_{\gb}(\lambda_{\gb,p_n} (k), k) = \tilde{D}_{\gb}(\tilde{E}_n + \eps_n (k),k) = 0 .
$$

The distinction between the non-splitting and splitting cases is encoded in the gamma function coefficients of the $T_j(\lambda,k)$ terms
in Lemma \ref{Tj}.

In the \textbf{non-splitting case}, we have for one of the gamma functions
$$
\Gamma \left( \frac{b - \lambda_{\gb,p_n} (k) }{2b} \right) \rightarrow \Gamma \left( \frac{b - \tilde{E}_n }{2b} \right).
$$
In case \eqref{1}, the argument of the gamma function is $((b+1)/2 - n)/b$ that is never a negative integer so $\Gamma^{-1}$ does not have a zero and does not contribute to the asymptotics. In case \eqref{2}, the argument of this gamma functions is
$(1-n)$ so there are poles of the gamma function. The corresponding zeros of $\Gamma^{-1}$ contribute to the vanishing rate of $T_1$ and $T_2$. As for the other gamma function, we have
$$
\Gamma \left( \frac{1 - \lambda_{\gb,p_n} (k) }{2} \right) \rightarrow
\left\{ \begin{array}{cc}
\Gamma \left( 1-n \right) & \eqref{1} \\
\Gamma \left( \frac{b +1}{2} - nb \right) & \eqref{2}
\end{array}
\right.,
$$
so there is a pole in case \eqref{1}. These asymptotics are developed in Section \ref{subsubsec:no-splitting1}.

In the \textbf{splitting case}, we have that $b = E_n/E_m$, and it follows that both gamma functions have poles. Consequently, both factors $\Gamma^{-1}$ have a zero as $k \rightarrow + \infty$ and these contribute to the asymptotics as developed in Section \ref{subsubsec:splitting1}. This analysis also shows that $T_1(\tilde{E}_n,k) = 0$ since for \eqref{1} the inverse of the gamma function $\Gamma ( \frac{1 - \lambda}{2})$ vanishes for $\lambda = E_n$ and in the case \eqref{2} the inverse of the other gamma function $\Gamma ( \frac{b - \lambda }{2b} ) = 0$ for $\lambda = bE_n$.
\end{remark}

\subsubsection{No splitting of levels}\label{subsubsec:no-splitting1}

%
Let us prove Item \eqref{I} in Theorem \ref{magnetic-step}. The existence of $p_{n}$ comes from Lemma \ref{set-of-limits}. \Bk Let us assume case \eqref{1}, the proof of case \eqref{2} being symmetric. Then
\begin{equation}
\label{E:ExpandGamma1}
\lim_{k\to+\infty}\frac{1}{\Gamma(\tfrac{b-\lambda_{\gb,p_{n}}(k)}{2b})}=\frac{1}{\Gamma(\tfrac{b-E_{n}}{2b})}\neq0,
\end{equation}
the last relation coming from the facts that the only roots of $1/\Gamma$ are the negative integers and that $\frac{b-E_{n}}{2}\notin -\N$, since this is not a splitting case. Recall that the roots of $1/\Gamma$ are simple. In particular, as $k$ gets large,
\begin{equation}
\label{E:derive1sG}
\begin{aligned}
\frac{1}{\Gamma(\frac{1-\lambda_{\gb,p_{n}}(k)}{2})}
&=-\tfrac{\eps_{n}(k)}{2}
\left(\tfrac{1}{\Gamma}\right)'(1-n)(1+O(\eps_{n}(k))\\
&=(-\tfrac{\eps_{n}(k)}{2})(-1)^{n-1}(n-1)!(1+o(1)).
\end{aligned}
\end{equation}
As mentioned above, we replace $\lambda_{\gb,p_{n}}(k)$ by $E_{n}+\eps_{n}(k)$ in the equation $\tilde{D}_{\gb}(\lambda_{\gb,p_{n}}(k),k)=0$
and obtain an equation in the form
$$\hat\alpha(k) \eps_{n}(k)+\hat\beta(k)=0,$$
where $\hat\alpha$ and $\hat\beta$ are analytic functions such that
$$\hat\alpha(k)\underset{k\to+\infty}{\sim}-\frac{1}{2}(-1)^{n-1}(n-1)!\frac{1}{\Gamma(\frac{b-E_{n}}{2b})}k^{-\frac{b+1}{2b}E_{n}}2^{\frac{3}{2}-\frac{(1+b)E_{n}}{4b}}b^{\frac{b+E_{n}}{4b}}\pi,$$
$$\hat\beta(k)\underset{k\to+\infty}{\sim}-\frac{1}{\Gamma(\frac{b-E_{n}}{2b})} k^{\frac{b-1}{2b}E_{n}-2}
\left(\frac{\sqrt{\pi}}{2} \right) 2^{E_{n}\frac{b-1}{4b}}b^{\frac{b+E_{n}}{4b}}(1+b)\sin\left(\frac{E_{n}\pi}{2}\right)e^{-k^2},$$
obtained by using Lemma \ref{Tj}. We deduce the asymptotic behavior of $\eps_{n}(k)$ as $k\to+\infty$.

Since $\eps_n(k)$ is negative the corresponding band functions $\lambda_{\gb,n}(k)$ approach the Landau level $E_{n}$ or $bE_{n}$ as $k \rightarrow + \infty$ from below. Taking into account the fact that $\lambda_{\gb,p_{n}}(k) \rightarrow + \infty$ as $k \rightarrow - \infty$, this means that these band functions are not monotonic and have at least one minimum.

\subsubsection{Splitting of levels}\label{subsubsec:splitting1}
Let us finally give the proof of Item \eqref{II} in Theorem \ref{magnetic-step}. When the condition for splitting is achieved, two band functions have the same limit as $k \rightarrow + \infty$. We prove that one level approaches the common Landau level from below, and hence has a minimum, whereas the other band function tends to its limit by above.

The existence of $p_{n}$ is a consequence of Lemma \ref{set-of-limits}.
We also know from this lemma that both functions $\eps_{n}^{\pm}$ tends to 0 as $k\to+\infty$. We use the generic notation $\lambda(k)=E_{n}+\eps(k)$ satified by these two functions. Recalling that $b=\frac{E_{n}}{E_{m}}$, we note that as $k$ tends to $+\infty$:
\[
\frac{1}{\Gamma\left(\frac{b-\lambda(k)}{2b}\right)}=\frac{1}{\Gamma\left(1-m-\frac{\eps(k)}{2b}\right)}=-\frac{\eps(k)}{2b}\left(\frac{1}{\Gamma}\right)'(1-m)+\mathcal{O}(\eps(k)^2)
\]
and
\[
\frac{1}{\Gamma\left(\frac{1-\lambda(k)}{2}\right)}=\frac{1}{\Gamma\left(1-n-\frac{\eps(k)}{2}\right)}=-\frac{\eps(k)}{2}\left(\frac{1}{\Gamma}\right)'(1-n)+\mathcal{O}(\eps(k)^2).
\]
Substituting these Taylor expansions into \eqref{E:Devwronskien}, the condition $D_{\gb}(\lambda(k), k)=0$ implies that $\eps_{n}^{\pm}$ are the roots of the equation
\begin{equation}
\label{E:equationsurepsilon}
\hat{A}\eps^2+\eps (\hat{B}+\hat{C})+\hat{D}=0,
\end{equation}
where $(\hat{A},\hat{B},\hat{C},\hat{D})$ are analytic functions of $k$ respectively equivalent to the following functions as $k\to+\infty$:

\begin{equation}
\label{E:devABCD}
\left\{
\begin{aligned}
A(k)&=(-1)^{n+m}(n-1)!(m-1)!\pi2^{-\frac{1}{2}-\frac{(1+b)E_{n}}{4b}}b^{\frac{m}{2}-1}k^{-\frac{b+1}{2b}E_{n}},\\
B(k)&=(-1)^{n+m}(m-1)!\sqrt{\pi}2^{-3/2+\frac{(b-1)E_{n}}{4b}}b^{\frac{b+E_{n}}{4b}-1}(1+b)k^{\frac{b-1}{2b}E_{n}-2}e^{-k^2},\\
C(k)&=(-1)^{n+m}(n-1)!\sqrt{\pi}2^{-3/2+\frac{(1-b)E_{n}}{4b}}b^{\frac{b-E_{n}}{4b}-1}(1+b)k^{\frac{-b+1}{2b}E_{n}-2}e^{-k^2/b},\\
D(k)&=(-1)^{n+m-1}2^{\frac{E_{n}+b(2+E_{n})}{4b}}b^{\frac{b-E_{n}}{4b}}k^{\frac{b+1}{2b}E_{n}}e^{-k^2(1+1/b)}.
\end{aligned}
\right.
\end{equation}
The discriminant
\[
\Delta=(\hat{B}+\hat{C})^2-4\hat{A}\hat{D}
\]
is positive for $k$ large enough since there hold $\hat{B}(k)\gg\hat{C}(k)$ and $\hat{B}(k)^2 \gg \hat{D}(k)$ as $k$ goes to $+\infty$ (notice also that $\hat{A}(k)=o(1)$). In particular, the two roots of \eqref{E:equationsurepsilon} admit the following expansions:
\[
\eps^{+}= -\frac{\hat{D}}{\hat{B}}(1+o(1)) \ \ \mbox{and} \ \ \eps^{-}=-\frac{\hat{B}}{\hat{A}}(1+o(1)).
\]
Therefore, using the equivalents given in \eqref{E:devABCD}, we get as $k$ gets large:
\[
\eps^{+}(k)= \frac{b^{-m+\frac{3}{2}}2^{m+\frac{3}{2}}}{(m-1)!(1+b)\sqrt{\pi}}k^{2m+1}e^{-k^2/b}(1+o(1)),
\]
\[
\eps^{-}(k)=-2^{n-3/2}\frac{(1+b)}{(n-1)!\sqrt{\pi}}k^{2n-3}e^{-k^2}(1+o(1)).
\]
By definition, $\eps_{n}^{-}<\eps^{+}_{n}$. Due to the obvious sign of $\eps^{+}$ and $\eps^{-}$ for $k$ large enough, we get $\eps^{\pm}=\eps_{n}^{\pm}$ and the result follows from the above asymptotic expansions.

\appendix

\section{Parabolic cylinder functions}\label{appendix}
We list some properties of parabolic cylinder functions, and refer to~\cite[Chapter~19]{AbSt64}.
Given $a\in\R$, denote by $U(a,x)$ and $V(a,x)$ the so-called standard
solutions of the differential equation
\[
-y''(x)+\tfrac{1}{4}x^2 y(x)=-a y(x),\quad x\in\R.
\]
If follows by the series expansions of $U$ and $V$ that they are real-valued
if $a\in\R$ and $x\in\R$. Moreover, as $x\to+\infty$,
\begin{equation}
\label{eq:Uasymp}
U(a,x)\sim \exp(-x^2/4)x^{-a-1/2}
\sum_{j=0}^{+\infty} (-1)^j\frac{\bigl(\tfrac12+a\bigr)_{2j}}{j!(2x^2)^j}
\end{equation}
and
\begin{equation}
\label{eq:Vasymp}
V(a,x)\sim \sqrt{\tfrac{2}{\pi}}\exp(x^2/4)x^{a-1/2}
\sum_{j=0}^{+\infty}\frac{\bigl(\tfrac12-a\bigr)_{2j}}{j!(2x^2)^j}.
\end{equation}
Here $(x)_j=x(x-1)(x-2)\cdots(x-j+1)$ denotes the Pochhammer symbol.
It also holds that
\begin{equation}
\label{eq:UVrelation}
\pi V(a,x) = \Gamma(a+\tfrac12)\{\sin(\pi a) U(a,x)+U(a,-x)\}.
\end{equation}
This enables us to give an asymptotic expansion of $U(a,x)$ as $x\to-\infty$. Indeed, as $x\to+\infty$,
\begin{equation}
\label{eq:Uasympneg}
\begin{aligned}
U(a,-x) & \sim \frac{\sqrt{2\pi}}{\Gamma(a+\tfrac12)}\exp(x^2/4)x^{a-1/2}
\sum_{j=0}^{+\infty}\frac{\bigl(\tfrac12-a\bigr)_{2j}}{j!(2x^2)^j} \\
&\quad - \sin(\pi a)\exp(-x^2/4)x^{-a-1/2}
\sum_{j=0}^{+\infty} (-1)^j\frac{\bigl(\tfrac12+a\bigr)_{2j}}{j!(2x^2)^j}.
\end{aligned}
\end{equation}
If we keep only two terms in the sum,
\begin{equation}
\label{eq:Uasympneg2}
\begin{aligned}
U(a,-x) & \sim \frac{\sqrt{2\pi}}{\Gamma(a+\tfrac12)}\exp(x^2/4)x^{a-1/2}
\Bigl(
1+\tfrac{1}{2}(\tfrac12-a)_2\frac{1}{x^2}+\mathcal{O}(1/x^4)
\Bigr)\\
&\quad - \sin(\pi a)\exp(-x^2/4)x^{-a-1/2}
\Bigl(
1-\tfrac{1}{2}(a+\tfrac12)_2\frac{1}{x^2}+\mathcal{O}(1/x^4).
\Bigr).
\end{aligned}
\end{equation}
Next, it holds that
\[
U'(a,x):=\frac{d}{dx}U(a,x) = \frac{x}{2}U(a,x)-U(a-1,x),
\]
and thus
\[
U'(a,-x)=-\frac{x}{2}U(a,-x)-U(a-1,-x).
\]
Combining this with~\eqref{eq:Uasympneg}, we have, as $x\to+\infty$,
\begin{equation}
\label{eq:Uprimeasympneg}
\begin{aligned}
U'(a,-x) & \sim -\frac{\sqrt{2\pi}}{\Gamma(a+\tfrac12)}\exp(x^2/4)x^{a+1/2}
\times \\
&\qquad \biggl[\frac{1}{2}
\sum_{j=0}^{+\infty}\frac{\bigl(\tfrac12-a\bigr)_{2j}}{j!(2x^2)^j}
+\frac{a-\tfrac12}{x^2}\sum_{j=0}^{+\infty}\frac{(\tfrac{3}{2}-a)_{2j}}{j!(2x^2)^j}
\biggr]
\\
&\quad +\sin(\pi a)\exp(-x^2/4)x^{-a+1/2}
\times\\
&\qquad\biggl[
\frac{1}{2}\sum_{j=0}^{+\infty} (-1)^j\frac{\bigl(a+\tfrac12\bigr)_{2j}}{j!(2x^2)^j}
-\sum_{j=0}^{+\infty}(-1)^j\frac{\bigl(a-\tfrac{1}{2}\bigr)_{2j}}{j!(2x^2)^j}
\biggr].
\end{aligned}
\end{equation}
With two terms in the sum only,
\begin{equation}
\label{eq:Uprimeasympneg2}
\begin{aligned}
U'(a,-x) & \sim -\frac{\sqrt{2\pi}}{\Gamma(a+\tfrac12)}\exp(x^2/4)x^{a+1/2}
\times\\
&\qquad
\Bigl(
\frac{1}{2}+\Bigl\{\frac{1}{4}(\tfrac12-a)_2+(a-\tfrac12)\Bigr\}\frac{1}{x^2}+
\mathcal{O}(1/x^4)
\Bigr)\\
&\quad +\sin(\pi a)\exp(-x^2/4)x^{-a+1/2}
\times\\
&\qquad
\Bigl(
-\frac12 +\Bigl\{-\frac14(a+\tfrac12)_2+\frac12(a-\tfrac12)_2\Bigr\}\frac{1}{x^2}+\mathcal{O}(1/x^4)
\Bigr).
\end{aligned}
\end{equation}


\begin{thebibliography}{10}

\bibitem{DLMF}
{\em Digital library of mathematical functions}.
\newblock National Institute of Standards and Technology.
  http://dlmf.nist.gov/.

\bibitem{AbSt64}
{\sc M.~Abramowitz, I.~Stegun}.
\newblock {\em Handbook of mathematical functions with formulas, graphs, and
  mathematical tables}, volume~55 of {\em National Bureau of Standards Applied
  Mathematics Series}.
\newblock For sale by the Superintendent of Documents, U.S. Government Printing
  Office, Washington, D.C. 1964; available free online:
  http://people.math.sfu.ca/~cbm/aands/.

\bibitem{BDP14}
{\sc V.~Bonnaillie-No{\"e}l, M.~Dauge, N.~Popoff}.
\newblock Ground state energy of the magnetic laplacian on general
  three-dimensional corner domains.
\newblock {\em arXiv:1403.7043}  (2014).

\bibitem{BHR14}
{\sc V.~Bonnaillie-No\"el, F.~H\'erau, N.~Raymond}.
\newblock {Magnetic WKB constructions}.
\newblock {\em arXiv:1405.7157}  (2014).

\bibitem{DauHe93}
{\sc M.~Dauge, B.~Helffer}.
\newblock Eigenvalues variation. {I}. {N}eumann problem for {S}turm-{L}iouville
  operators.
\newblock {\em J. Differential Equations} {\bf 104}(2) (1993) 243--262.

\bibitem{dBiePu99}
{\sc S.~De~Bievre, J.~Pule}.
\newblock Propagating edge states for a magnetic hamiltonian.
\newblock {\em Math. Phys. Electron. J.} {\bf 5}(3) (1999).

\bibitem{DiSj99}
{\sc M.~Dimassi, J.~Sj{\"o}strand}.
\newblock {\em Spectral asymptotics in the semi-classical limit}, volume 268 of
  {\em London Mathematical Society Lecture Note Series}.
\newblock Cambridge University Press, Cambridge 1999.

\bibitem{DomGerRai}
{\sc N.~Dombrowski, F.~Germinet, G.~Raikov}.
\newblock Quantization of edge currents along magnetic barriers and magnetic
  guides.
\newblock {\em Ann. Henri Poincar\'e} {\bf 12}(6) (2011) 1169--1197.

\bibitem{DomHisSoc13}
{\sc N.~Dombrowski, P.~D. Hislop, E.~Soccorsi}.
\newblock Edge currents and eigenvalue estimates for magnetic barrier
  {S}chr\"odinger operators.
\newblock {\em {A}symptotic {A}nalyis} {\bf 89}(3-4) (2014) 331--363.

\bibitem{FouHel10}
{\sc S.~Fournais, B.~Helffer}.
\newblock {\em Spectral methods in surface superconductivity}.
\newblock Progress in Nonlinear Differential Equations and their Applications,
  77. Birkh\"auser Boston Inc., Boston, MA 2010.

\bibitem{Gho08}
{\sc T.~Ghosh, A.~De~Martino, W.~H{\"a}usler, L.~DellÕAnna, R.~Egger}.
\newblock Conductance quantization and snake states in graphene magnetic
  waveguides.
\newblock {\em Physical Review B} {\bf 77}(8) (2008) 081404.

\bibitem{He88}
{\sc B.~Helffer}.
\newblock {\em Semi-classical analysis for the {S}chr\"odinger operator and
  applications}, volume 1336 of {\em Lecture Notes in Mathematics}.
\newblock Springer-Verlag, Berlin 1988.

\bibitem{HeMo01}
{\sc B.~Helffer, A.~Morame}.
\newblock Magnetic bottles in connection with superconductivity.
\newblock {\em J. Funct. Anal.} {\bf 185}(2) (2001) 604--680.

\bibitem{HeSj84}
{\sc B.~Helffer, J.~Sj{\"o}strand}.
\newblock Multiple wells in the semiclassical limit. {I}.
\newblock {\em Comm. Partial Differential Equations} {\bf 9}(4) (1984)
  337--408.

\bibitem{HeSjIII}
{\sc B.~Helffer, J.~Sj{\"o}strand}.
\newblock Multiple wells in the semi-classical limit {III}-interaction through
  non-resonant wells.
\newblock {\em Mathematische Nachrichten} {\bf 124}(1) (1985) 263--313.

\bibitem{HisPofSoc14}
{\sc P.~D. Hislop, N.~Popoff, E.~Soccorsi}.
\newblock Characterization of bulk states in one-edge quantum hall systems.
\newblock {\em Annales de l'IHP}  (2014).

\bibitem{HisSoc15}
{\sc P.~D. Hislop, E.~Soccorsi}.
\newblock Edge states induced by {I}watsuka hamiltonians with positive magnetic
  fields.
\newblock {\em Journal of Mathematical Analysis and Applications} {\bf 422}(1)
  (2015) 594--624.

\bibitem{HisSoc13}
{\sc P.~D. Hislop, E.~Soccorsi}.
\newblock Edge states induced by {I}watsuka {H}amiltonians with positive
  magnetic fields.
\newblock {\em J.\ Math.\ Anal.\ Appl.}  (to appear 2014).

\bibitem{Ivrii}
{\sc V.~Ivrii}.
\newblock {\em Microlocal Analysis, Sharp Spectral Asymptotics and
  Applications}.
\newblock Available online on the author's website.

\bibitem{Iwa85}
{\sc A.~Iwatsuka}.
\newblock Examples of absolutely continuous {S}chr\"odinger operators in
  magnetic fields.
\newblock {\em Publ. Res. Inst. Math. Sci.} {\bf 21}(2) (1985) 385--401.

\bibitem{ManPur97}
{\sc M.~Mantoiu, R.~Purice}.
\newblock Some propagation properties of the {I}watsuka model.
\newblock {\em Communications in mathematical physics} {\bf 188}(3) (1997)
  691--708.

\bibitem{Or08}
{\sc L.~Oroszl{\'a}ny, P.~Rakyta, A.~Korm{\'a}nyos, C.~J. Lambert, J.~Cserti}.
\newblock Theory of snake states in graphene.
\newblock {\em {P}hysical Review B} {\bf 77}(8) (2008) 081403.

\bibitem{PeetMat93}
{\sc F.~Peeters, A.~Matulis}.
\newblock Quantum structures created by nonhomogeneous magnetic fields.
\newblock {\em Phys. Rev. B} {\bf 48} (1993) 15166--15174.

\bibitem{PeetVas93}
{\sc F.~Peeters, P.~Vasilopoulos}.
\newblock Quantum transport of a two-dimensional electron gas in a spatially
  modulated magnetic field.
\newblock {\em Phys. Rev. B} {\bf 47} (1993) 1466--1473.

\bibitem{Popoff}
{\sc N.~Popoff}.
\newblock {\em {Sur l'op\'erateur de Schr\"odinger magn\'etique dans un domaine
  di\'edral}}.
\newblock PhD thesis, Universit\'e de Rennes 1 2012.

\bibitem{PofSoc14}
{\sc N.~Popoff, E.~Soccorsi}.
\newblock Limiting absorption principle for the magnetic dirichlet laplacian in
  a half-plane.
\newblock {\em arXiv:1409.6082}  (2014).

\bibitem{Rai92}
{\sc G.~Raikov}.
\newblock Eigenvalue asymptotics for the {S}chr\"odinger operator with
  perturbed periodic potential.
\newblock {\em Invent. Math.} {\bf 110}(1) (1992) 75--93.

\bibitem{Ray14}
{\sc N.~Raymond}.
\newblock {\em {Little Magnetic Book}}.
\newblock arXiv:1405.7912 2014.

\bibitem{ReSi78}
{\sc M.~Reed, B.~Simon}.
\newblock {\em Methods of modern mathematical physics. {IV}. {A}nalysis of
  operators}.
\newblock Academic Press [Harcourt Brace Jovanovich Publishers], New York 1978.

\bibitem{PeetRej}
{\sc J.~Reijniers, F.~Peeters}.
\newblock Snake orbits and related magnetic edge states.
\newblock {\em Journal of Physics: Condensed Matter} {\bf 12}(47) (2000) 9771.

\bibitem{SJST}
{\sc D.~Saint-James, G.~Sarma, E.~Thomas}.
\newblock {\em {Type II Superconductivity}}.
\newblock Pergamon, Oxford 1969.

\bibitem{Si05}
{\sc B.~Simon}.
\newblock Sturm oscillation and comparison theorems.
\newblock In {\em Sturm-{L}iouville theory}, pages 29--43. Birkh\"auser, Basel
  2005.

\bibitem{Soc01}
{\sc E.~Soccorsi}.
\newblock Analyticity and asymptotic properties of the {M}axwell operator's
  dispersion curves.
\newblock {\em Mathematical Models and Methods in Applied Sciences} {\bf
  11}(03) (2001) 387--406.

\bibitem{Yaf08}
{\sc D.~Yafaev}.
\newblock On spectral properties of translationally invariant magnetic
  schr{\"o}dinger operators.
\newblock {\em Annales Henri Poincar{\'e}} {\bf 9}(1) (2008) 181--207.

\end{thebibliography}
\end{document}